 \newcommand {\theoremstyle} [1] { }
\newenvironment{proof}{{\noindent\it\underline{Proof}}:}{\hfill$\Box$}
\newenvironment{po28}
{\noindent\it\underline{Proof of Theorem \ref{shooting}}:\rm}{\hfill$\Box$} \newenvironment{pt11}
{\noindent\it\underline{Proof of Theorem \ref{multi}}:\rm}{}
\newtheorem{thm}{Theorem}[section]
 \newtheorem{prop}{Proposition}[section]
 \theoremstyle{plain}
 \newtheorem{lem}[thm]{Lemma} %%Delete [thm] to re-start numbering
 \newtheorem{cor}[thm]{Corollary}
 \theoremstyle{definition}
 \theoremstyle{remark}
 \newtheorem{rem}[thm]{Remark}
\def\R{\mathbb{R}}
\def \ee{\varepsilon}
\begin{document}

\author{P. Amster and M. P. Kuna}

\title{On exact multiplicity for a second order equation with radiation boundary conditions}
\date{}
\maketitle

\begin{center}
Departamento de Matem\'atica, \\
Facultad de Ciencias Exactas y Naturales\\
Universidad de Buenos Aires
and \\
IMAS - CONICET\\
Ciudad Universitaria, Pabell\'on I,
(1428) Buenos Aires, Argentina \\
{\sl E-mails}: pamster@dm.uba.ar -- mpkuna@dm.uba.ar
\end{center}
\bigskip

\begin{abstract}
A second order ordinary differential equation with 
a superlinear term $g(x,u)$  under radiation boundary conditions is studied.
Using a shooting argument, all the results obtained in the previous work \cite{AKR3} for a Painlev\'e II equation are extended. 
It is proved that the uniqueness or multiplicity 
of solutions depend on the interaction between the mapping
$\frac {\partial g}{\partial u}(\cdot,0)$  
and the first eigenvalue of the associated linear operator. 
Furthermore, two open problems posed in \cite{AKR3}
regarding, on the one hand, the existence of sign-changing solutions and, on the other hand, exact multiplicity are solved. 

\medskip

{\em Keywords}: Second order ODEs; Radiation boundary conditions; Multiple solutions; Electro-diffusion models.

\medskip MSC 2010: 34B15.

\end{abstract}

\section{Introduction}

In \cite{AKR3}, the following problem 
arising on a two-ion electro-diffusion model 
(see \cite{B}, \cite{GCh}) 
was studied:
\begin{equation}
 \label{concreto}
 u''(x)=Ku(x)^3 + L(x)u(x)+A
\end{equation}
with
\begin{equation}
 \label{radiation}
u'(0)=a_0 u(0), \;\; u'(1)=a_1 u(1).
\end{equation}
Here, $K$ and $A$ 
some given positive constants and $L(x):= a_0^2 + (a_1^2-a_0^2)x$. Unlike the standard Robin condition, 
both coefficients $a_0$ and $a_1$ in the radiation boundary condition (\ref{radiation}) are assumed to be positive. 

It was proven that the problem has 
a negative solution; moreover, 
if $a_1\leq a_0$ then there are no other solutions. 
When $a_1 >a_0$, the solution is still unique for $A\gg 0$ but, 
if $A$ is sufficiently small, then
the problem has at least three solutions. 
Numerical evidence in \cite{AKR3} suggests that the number of solutions 
cannot be arbitrarily large and it was proven that, indeed, 
there exists exactly one negative solution, at most two positive solutions and 
that the set of solutions is bounded. 
It was conjectured that the maximum number of solutions is $3$ (typically, one of them negative and the other two positive)
but, however, none of the results 
in \cite{AKR3} prevents against the existence of many sign-changing solutions. 

%\smallskip
%
%Problem (\ref{concreto})-(\ref{radiation})
%arises on a model in two-ion electrodiffusion 
%derived independently by Grafov and Chernenko \cite{GCh} and Bass \cite{B} in the context of charged ion transport. 
%Two-point Dirichlet and periodic boundary value problems (BVPs) 
%for this Painlev\'e II equation and a 
%non-integrable generalization 
%were successively investigated in \cite{AMR1} and \cite{AMR2}. 
%A Neumann BVP for a Painlev\'e II equation depending on the 
%Dirichlet boundary values of the solution 
%was recently studied in \cite{AKR1} and \cite{AKR2} 
%by a two-dimensional shooting method.

In this work, we study a generalization of the previous problem, namely the equation 

\begin{equation}
 \label{eq}
u''(x)=g(x,u(x))+p(x) 
\end{equation}
where $p\in C([0,1])$ and $g:[0,1]\times \mathbb R \to \mathbb R$ 
is continuous, 
of class $C^1$ with respect to $u$ and superlinear, that is: 
\begin{equation}
\label{superlin} 
\lim_{ \left|u\right| \rightarrow + \infty} \frac{g(x,u)}{u}= +\infty
\end{equation}
uniformly in $x \in [0,1]$.  Without loss of generality, we shall assume that $g(x,0)=0$ for all $x\in [0,1]$. 
%In some cases we shall employ, instead, the slightly weaker condition: for every $M>0$ there exists $K>0$ such that 
%\begin{equation}
%\label{superlin-variat} 
%G(x,u)\ge Mu^2 - K
%\end{equation}
%for all $u$, where $G(x,u):=\int_0^u g(x,s)\, ds$. 
As before, we look for those solutions satisfying
the radiation boundary condition (\ref{radiation}) with $a_0$, $a_1 >0$. In the spirit of \cite{AKR3}, we shall 
assume throughout the paper that 
\begin{equation}\label{crec}
g \,\hbox{ is strictly increasing in }\, u
\end{equation}
and
\begin{equation}
\label{a-posit}
p(x) > 0 \,\hbox{ for all $x$}.
\end{equation}
For  general multiplicity results avoiding conditions (\ref{crec}) and (\ref{a-posit}) see  \cite{AK}. 
In the present setting, we shall demonstrate that all the results in \cite{AKR3} can be retrieved in a simple manner; 
furthermore, we shall give an answer to two questions that were left open. 
Specifically, it shall be seen that the set of solutions is 
bounded and contains always a negative solution, which tends uniformly to $-\infty$ as $p\to +\infty$ uniformly. Moreover, we shall extend the uniqueness statement in \cite{AKR3} 
by imposing the condition 
that $-\frac{\partial g}{\partial u}(x,u)$
is smaller than the first eigenvalue $\lambda_1$ of the associated linear operator for all $u$. Under a weaker condition, it shall be proved that  uniqueness holds also if $p$ is large.
As a complement of the uniqueness results, we shall also 
prove that if 
$-\frac{\partial g}{\partial u}(x,0)$ lies below $\lambda_1$ then the problem has at least three solutions, provided that $\|p\|_\infty$ is small. 
Furthermore, under 
an extra condition, which is fulfilled in (\ref{concreto}),  the multiplicity result is sharp.  
This 
extends 
the corresponding result for the particular problem 
(\ref{concreto}) and gives an answer to a  
question, sustained by numerical evidence but not 
proven in \cite{AKR3}:
 
\begin{thm}
\label{multi}
Assume that (\ref{superlin}), (\ref{crec}) and (\ref{a-posit}) hold. Then 
(\ref{eq})-(\ref{radiation}) has a negative solution. 
Moreover, if 
 \begin{equation}
  \label{H_9}
 \frac{\partial g}{\partial u}(\cdot,0) \lneq -\lambda_1
 \end{equation}
then there exists a constant $p_1 >0$ such that 
problem (\ref{eq})-(\ref{radiation}) has at least three solutions, one of them negative, one of them positive and another one sign-changing, when $\| p\|_{\infty}<  p_1$. 
If furthermore
$$
\frac{\partial g}{\partial u}(x,u)>\frac{g(x,u)}{u} 
$$
for all $u\neq 0$ and all $x$, 
then (\ref{eq})-(\ref{radiation}) has no other solutions, provided that $p_1$ is small enough.

\end{thm}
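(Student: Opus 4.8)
The plan is to realize every solution as a zero of a shooting function. For $s\in\R$ let $u_s$ denote the solution of $(\ref{eq})$ with $u_s(0)=s$, $u_s'(0)=a_0s$, and put $\Phi_p(s):=u_s'(1)-a_1u_s(1)$; by Theorem~\ref{shooting}, $\Phi_p$ is $C^1$ on its open domain, the solutions of $(\ref{eq})$-$(\ref{radiation})$ correspond bijectively to its zeros, $\Phi_p$ depends $C^1$-continuously on $p$ on compact subsets of the domain, and by $(\ref{superlin})$ the domain is bounded to the right with $\Phi_p(s)\to+\infty$ at that endpoint. The existence of a negative solution is Theorem~\ref{negativa}, so I concentrate on multiplicity, the crucial step being a transversality fact forced by $(\ref{H_9})$. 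Take $p\equiv0$; then $u_0\equiv0$, $\Phi_0(0)=0$, and $\Phi_0'(0)=v'(1)-a_1v(1)$ where $v''=\frac{\partial g}{\partial u}(x,0)v$, $v(0)=1$, $v'(0)=a_0$. Since $\frac{\partial g}{\partial u}\ge0$, $v>0$ and is increasing on $[0,1]$. Let $\lambda_1$, $\phi_1>0$ be the principal eigenpair of $-w''=\lambda w$ under $(\ref{radiation})$; rescaling $\phi_1$ to share the Cauchy data of $v$ at $0$ (possible because $\phi_1$ obeys $(\ref{radiation})$), the Wronskian $W=v'\phi_1-v\phi_1'$ satisfies $W(0)=0$ and $W'=v\phi_1\bigl(\frac{\partial g}{\partial u}(x,0)+\lambda_1\bigr)$, which by $(\ref{H_9})$ is $\le0$ and strictly negative on a set of positive measure; hence $W(1)<0$, i.e. $\Phi_0'(0)<0$, and $s=0$ is a transversal zero of $\Phi_0$.

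For at least three solutions I would show $\Phi_0$ has at least three transversal zeros: besides $s=0$, one in $(-\infty,0)$ — a negative solution of the $p\equiv0$ problem, obtained since $\Phi_0(s)\to-\infty$ as $s\to-\infty$ (the superlinear term dominates) while $\Phi_0(0^-)>0$ — and one in $(0,\infty)$, since $\Phi_0(0^+)<0$ by the transversality just proved while $\Phi_0\to+\infty$ at the right endpoint. As $\|p\|_\infty\to0$ each of these transversal zeros persists, so for $\|p\|_\infty<p_1$ we get three zeros $\tilde s_-<\tilde s_0<\tilde s_+$ of $\Phi_p$, hence three solutions; shrinking $p_1$, $u_{\tilde s_-}$ stays $C^1$-close to the ($<0$) negative solution and so is negative, and $u_{\tilde s_+}$ stays close to the ($>0$) positive one and so is positive. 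The delicate point is to produce a solution that changes sign: for $\|p\|_\infty>0$ small, the shooting data $s$ lying just below $0$, between the value $s^\sharp<0$ at which $u_s$ passes from everywhere negative to sign-changing and $0$, yield exactly the sign-changing profiles (each with one negative minimum and one zero, convex and positive afterwards); one computes $\Phi_p(s^\sharp)>0$ directly and must show $\Phi_p$ takes the opposite sign somewhere on this interval, which is where the smallness of $p$ and $(\ref{H_9})$ (fixing the sign of $\Phi_p$ near $0$ through $\Phi_0'(0)<0$) enter. An alternative: $(\ref{H_9})$ makes the principal eigenvalue of $w\mapsto-w''+\frac{\partial g}{\partial u}(x,0)w$ under $(\ref{radiation})$ negative, so for $\|p\|_\infty$ small a small multiple of the corresponding positive eigenfunction is a strict interior subsolution lying above the supersolution $0$, and a three-solutions argument together with a uniform positivity lower bound for positive solutions again delivers the sign-changing solution.

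For the exact count assume in addition $\frac{\partial g}{\partial u}(x,u)>\frac{g(x,u)}{u}$ for $u\ne0$, so $u\mapsto g(x,u)/u$ is strictly increasing on each half-line. Two solutions $u_1\ne u_2$ of the same sign are then ordered (if $u_1(0)<u_2(0)$ then $w=u_1-u_2$ solves $w''=qw$ with $q=\int_0^1\frac{\partial g}{\partial u}(x,\theta u_1+(1-\theta)u_2)\,d\theta\ge0$ and $w(0),w'(0)<0$, forcing $w<0$), and a Picone/Brezis–Oswald identity obtained by integrating $u_1''\,u_2^2/u_1-u_2''\,u_2$ — whose boundary terms cancel exactly because $(\ref{radiation})$ has the homogeneous form $u'=au$ — gives
$$
0\ \le\ \int_0^1\bigl(u_2'-\tfrac{u_2}{u_1}u_1'\bigr)^2dx
=\int_0^1 u_2^2\bigl(\tfrac{g(x,u_1)}{u_1}-\tfrac{g(x,u_2)}{u_2}\bigr)dx+\int_0^1 p\,u_2\bigl(\tfrac{u_2}{u_1}-1\bigr)dx ,
$$
whose first right-hand term is strictly positive and second strictly negative; since the solution set is bounded (Theorem~\ref{bound}), shrinking $p_1$ rules out two solutions of the same sign, leaving at most one negative and one positive. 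To bound the sign-changing solutions I would restrict $\Phi_p$ to the interval of sign-changing shooting data and prove $\Phi_p'(s)<0$ at each of its zeros, which forbids two such zeros: one compares the variational solution $v_s$ ($v''=\frac{\partial g}{\partial u}(x,u_s)v$, $v(0)=1$, $v'(0)=a_0$) with $u_s$ itself, which off its zero satisfies a Sturm–Liouville equation with potential $g(x,u_s)/u_s<\frac{\partial g}{\partial u}(x,u_s)$ (neglecting $p$, permitted by its smallness); carrying the resulting Wronskian comparison through the single zero of $u_s$ pins down the sign of $v_s'(1)-a_1v_s(1)=\Phi_p'(s)$.

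I expect this last point — running the Sturm comparison through the zero of a sign-changing solution, and keeping both this uniqueness argument and the existence argument for the sign-changing solution stable under the perturbation $p$ — to be the main obstacle. Granted these, combining the lower bound of three solutions with the bound of one per sign class gives exactly three solutions for $p_1$ small enough.
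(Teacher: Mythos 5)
Your first part --- the shooting function $\Phi_p(s)=u_s'(1)-a_1u_s(1)$, the Wronskian comparison of the linearized solution $v$ with the rescaled first eigenfunction to get $v'(1)<a_1v(1)$ from (\ref{H_9}), and the persistence of three transversal zeros of $\Phi_0$ under small $p$ --- is exactly the paper's route (your $\Phi_p$ is the paper's $R_p$, your Wronskian computation is Lemma \ref{lemaphi}, and the persistence scheme is Proposition \ref{exact-prop}; the sign-changing solution is likewise located, as you indicate, by shooting data between the paper's $\lambda_0$ and $0$, using $T\to+\infty$ at $\lambda_0^+$ and $T(0)<a_1$). The genuine gap is in the exact count. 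First, the Picone identity does not yield a contradiction as written: with your labeling $u_1<u_2$ and $g(x,u)/u$ strictly increasing, the term $\int_0^1u_2^2\bigl(\frac{g(x,u_1)}{u_1}-\frac{g(x,u_2)}{u_2}\bigr)dx$ is strictly \emph{negative} and the $p$-term is \emph{positive}; after relabeling you instead obtain $0\le(\text{positive})+(\text{negative})$, which is perfectly consistent. Making this quantitative uniformly in small $p$ fails precisely in the dangerous regime where the two same-sign solutions coalesce as $p\to0$, so some non-degeneracy input is unavoidable. Second, you flag the Sturm comparison ``through the zero'' of a sign-changing solution as the main obstacle and leave it open; that step is indeed problematic, because on the sign-changing range the potential comparison reverses sign with $u_s$.

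The paper avoids both difficulties by exploiting the variational equation globally: with $w_\lambda=\partial u_\lambda/\partial\lambda$ (positive everywhere by Lemma \ref{nosecruzan}), one has $u_\lambda(1)w_\lambda'(1)-u_\lambda'(1)w_\lambda(1)=\int_0^1\bigl(u_\lambda\frac{\partial g}{\partial u}(x,u_\lambda)-g(x,u_\lambda)-p\bigr)w_\lambda\,dx$, and under your extra hypothesis the integrand has the sign of $u_\lambda$. At $p=0$ there are no sign-changing shooting data, so $T(\lambda)=u_\lambda'(1)/u_\lambda(1)$ is strictly decreasing on $(\lambda_*,0)$ and strictly increasing on $(0,\lambda^*)$; combined with $T(0)<a_1$ this shows $R_0$ has \emph{exactly} three zeros, and the same computation gives $R_0'(\lambda_\pm)>0$, $R_0'(0)<0$, i.e.\ all three are non-degenerate. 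Exactness for small $p$ then follows from $C^1$-convergence $R_p\to R_0$ on compacta together with the a priori bound of Theorem \ref{bound}, which confines all zeros to a fixed compact set. In short: you already proved the transversality needed for the exact count in your existence argument; the missing ingredient is the monotonicity of $T$ away from $0$ at $p=0$ (Proposition \ref{monot}), which replaces both your Picone argument and the unproved Sturm step.
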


It follows that, under the previous assumptions, the number of solutions 
moves from $3$ to $1$ as  $\|p\|_\infty$ gets large. 
Similarly, for each fixed $p$, if we take $a_1$  as a parameter then 
uniqueness or multiplicity of solutions vary according to 
its different values. 
In general terms, multiplicity arises when $a_1$ is sufficiently large and should not be expected if $a_1$ is small. More precisely:

\begin{thm}
 \label{shooting} 

Assume that (\ref{superlin}), (\ref{crec}) and (\ref{a-posit}) hold. Then there exist constants $a^*>a_*>0$ such 
that:

\begin{enumerate}
\item 
If $a_1 > a ^*$ then 
problem (\ref{eq})-(\ref{radiation}) 
has at least three solutions, 
one of them negative and another one 
sign-changing.  

\item 
If $0<a_1 < a ^*$ then problem (\ref{eq})-(\ref{radiation}) 
has a unique (negative) solution.

\end{enumerate}

\end{thm}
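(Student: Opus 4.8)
The plan is to use a shooting argument with respect to the value of $u$ at the left endpoint. Since the boundary condition at $x=0$ reads $u'(0)=a_0u(0)$, for $s\in\R$ let $u(\cdot;s)$ be the maximal solution of $u''=g(x,u)+p(x)$ with $u(0)=s$, $u'(0)=a_0s$, and set $D:=\{s\in\R:\ u(\cdot;s)\text{ is defined on all of }[0,1]\}$; crucially, $D$ and the map $s\mapsto u(\cdot;s)$ do \emph{not} depend on $a_1$. First I would record the structure of this shooting data. Using that $g$ is increasing in $u$ (condition (\ref{crec})), a comparison argument — $w:=u(\cdot;s_2)-u(\cdot;s_1)$ starts positive with positive slope and is convex as long as it is nonnegative — shows that $s\mapsto u(x;s)$ is strictly increasing for every $x\in[0,1]$; combined with the superlinearity (\ref{superlin}) one obtains that $D$ is a nonempty bounded open interval $(s_-,s_+)$, that $u(\cdot;s)$ blows up at an interior point, to $-\infty$ as $s\downarrow s_-$ and to $+\infty$ as $s\uparrow s_+$, and that $u(1;\cdot):(s_-,s_+)\to\R$ is an increasing bijection. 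A solution of (\ref{eq})-(\ref{radiation}) is exactly a zero on $D$ of $\Phi(s):=u'(1;s)-a_1u(1;s)$.

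The second ingredient consists of two sign facts coming from the maximum principle. By (\ref{crec}), (\ref{a-posit}) and $g(x,0)=0$ we have $u''=g(x,u)+p>0$ wherever $u\geq0$, so no solution of the ODE satisfying $u'(0)=a_0u(0)$ can have a nonnegative maximum at an interior point or at $x=0$; hence every solution of (\ref{eq})-(\ref{radiation}) is either negative on $[0,1]$ or attains a positive maximum at $x=1$, and in particular any zero $s$ of $\Phi$ with $u(1;s)<0$ yields a negative solution. Applying the same reasoning to the trajectory $u(\cdot;s_0)$, where $s_0\in D$ is the unique point with $u(1;s_0)=0$, one finds $s_0<0$, $u(\cdot;s_0)\leq 0$ and $v_0:=u'(1;s_0)>0$. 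Setting $\psi(s):=u'(1;s)/u(1;s)$ on $D\setminus\{s_0\}$, the zeros of $\Phi$ other than $s_0$ are precisely the roots of $\psi(s)=a_1$ (while $\Phi(s_0)=v_0\neq0$); from the blow-up behaviour and from $v_0>0$ one reads off $\psi(s)\to+\infty$ as $s\to s_-^+$, as $s\to s_0^+$ and as $s\to s_+^-$, while $\psi(s)\to-\infty$ as $s\to s_0^-$.

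For the multiplicity claim (large $a_1$): on $(s_-,s_0)$ the continuous map $\psi$ runs from $+\infty$ to $-\infty$, so for every $a_1>0$ it meets the level $a_1$, giving a solution with $u(1;s)<0$, i.e. a negative one (this also recovers the negative solution of the previous sections). On $(s_0,s_+)$, $\psi$ tends to $+\infty$ at both ends, hence attains a finite minimum $m_+$, so for $a_1>m_+$ there are at least two solutions of $\psi(s)=a_1$ there, all with $u(1;s)>0$. Any such solution with $s<0$ satisfies $u(0;s)<0<u(1;s)$ and therefore changes sign: if $0\notin D$ this covers every solution in $(s_0,s_+)$, while if $0\in D$ then, since $\psi\to+\infty$ at $s_0^+$ and $\psi(0)\in\R$, a solution with $s_0<s<0$ exists as soon as $a_1>\psi(0)$. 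Choosing $a^*$ larger than $m_+$ and $\psi(0)$, for $a_1>a^*$ we obtain at least three distinct solutions, one negative and one sign-changing.

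For the uniqueness claim (small $a_1$): if $u_1,u_2$ solve (\ref{eq})-(\ref{radiation}), then $w:=u_1-u_2$ satisfies $w'(0)=a_0w(0)$, $w'(1)=a_1w(1)$ and $w''=c(x)w$ with $c(x):=\int_0^1\frac{\partial g}{\partial u}(x,u_2+tw)\,dt\geq0$ by (\ref{crec}). Multiplying by $w$ and integrating by parts gives $a_1w(1)^2=a_0w(0)^2+\int_0^1(w')^2\,dx+\int_0^1 cw^2\,dx$, whereas $w(1)=w(0)+\int_0^1w'\,dx$ and Cauchy--Schwarz give $w(1)^2\leq(1+\delta)w(0)^2+(1+\tfrac1\delta)\int_0^1(w')^2\,dx$ for every $\delta>0$. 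Comparing the two inequalities, if $a_1<\tfrac{a_0}{a_0+1}=:a_*$ one may choose $\delta$ so that $a_1(1+\delta)-a_0<0$ and $a_1(1+\tfrac1\delta)-1<0$, which forces $w(0)=0$ and $w'\equiv0$, so $w\equiv0$; thus the problem has at most one solution, which is negative. (Equivalently, for small $a_1$ the first eigenvalue $\lambda_1$ of the associated operator is positive whereas $-\frac{\partial g}{\partial u}\leq0$ by (\ref{crec}), so the uniqueness theorem of the previous section applies.) Enlarging $a^*$ if necessary so that $a^*>a_*$ completes the argument. I expect the main obstacle to be the boundary analysis of the shooting map — proving that $D$ is bounded and identifying the limits of $\psi$ at $s_\pm$ and $s_0$, which is where superlinearity and the blow-up rate must be used carefully — together with the bookkeeping in the multiplicity claim needed to guarantee that, whether or not $0\in D$, one of the three zeros is genuinely negative and another genuinely sign-changing.
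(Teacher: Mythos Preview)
Your approach is essentially the paper's: the shooting map $\psi$ is exactly the operator $T$ of Section~\ref{sec-shoot}, the monotonicity in $s$ is Lemma~\ref{nosecruzan}, the behaviour at $s_0$ (the paper's $\lambda_0$) and the minimum $m_+$ (the paper's $a_{min}$) are used identically, and your uniqueness threshold $a_*=\tfrac{a_0}{a_0+1}$ coincides with the paper's, which invokes the eigenvalue criterion (your parenthetical alternative) rather than the direct energy estimate you give.

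One genuine caveat: your assertion that $D=(s_-,s_+)$ is \emph{bounded} is not a consequence of (\ref{superlin}) alone. Superlinearity $g(x,u)/u\to+\infty$ does not force finite-time blow-up; one needs a growth condition of Osgood type such as $\int^{\infty}\!G(u)^{-1/2}\,du<\infty$, which fails e.g.\ for $g(u)=u(\log(2+u^2))^2$. The paper sidesteps this by allowing $\lambda_*=-\infty$, $\lambda^*=+\infty$ and proving $T(\lambda)\to+\infty$ directly via an energy argument (multiply the equation by $u'$ and use that $u_\lambda(1)\to\pm\infty$ together with superlinearity to get $|u_\lambda'(1)|\ge \sqrt{M}\,|u_\lambda(1)|+O(1)$ for arbitrary $M$). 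Replace your blow-up claim by this estimate and the rest of your outline goes through unchanged.
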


The paper is organized as follows. Section \ref{main} is devoted 
to present several general aspects of the problem and state uniqueness and related results. 
In section \ref{sec-shoot}, we define a shooting-type operator that will be used 
to derive the proofs of Theorems \ref{multi} and \ref{shooting}. 
Some open questions are briefly exposed in a last section.

\section{Uniqueness and related results}

\label{main}

This section is devoted to introduce general results concerning problem  (\ref{eq})-(\ref{radiation}) that shall be used in the proofs of the main results. In the first place, we observe that solutions are bounded: 

\begin{thm}
\label{bound}
Assume that (\ref{superlin}) holds. Then 
there exists a constant $C$ such that 
every solution $u$ of 
(\ref{eq})-(\ref{radiation}) satisfies $\|u\|_{C^2}\le C$.

\end{thm}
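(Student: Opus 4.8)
The plan is to obtain the bound in two stages: first an $L^\infty$ bound on $u$, then bounds on $u'$ and $u''$ that follow from it. For the sup-norm bound I would argue by contradiction, supposing there is a sequence of solutions $u_n$ with $M_n := \|u_n\|_\infty \to +\infty$. The key observation is that the radiation boundary conditions fix the sign of $u'$ at the endpoints in terms of the sign of $u$ there: if $u_n$ attains a positive maximum at an interior point $x_0\in(0,1)$ then $u_n''(x_0)\le 0$, whereas if the maximum is attained at $x=0$ then $u_n'(0)=a_0 u_n(0)>0$, which forces $u_n$ to be increasing near $0$ — contradicting maximality — and similarly $u_n'(1)=a_1u_n(1)>0$ rules out the maximum being at $x=1$. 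Hence a positive maximum (if $M_n$ is realised by $\max u_n$) must be interior, giving $u_n''(x_0)\le 0$, i.e.\ $g(x_0,u_n(x_0))+p(x_0)\le 0$; but superlinearity \eqref{superlin} together with $\|p\|_\infty$ fixed makes $g(x_0,u_n(x_0))+p(x_0)>0$ once $u_n(x_0)=M_n$ is large enough, a contradiction. The symmetric argument with a negative minimum (where now $u_n''\ge0$ at an interior minimizer, and the boundary terms $u_n'(0)=a_0u_n(0)<0$, $u_n'(1)=a_1u_n(1)<0$ again exclude the endpoints) handles the case $M_n = -\min u_n$. Either way we reach a contradiction, so $\|u\|_\infty\le C_0$ for some constant $C_0$ depending only on $g$, $p$, $a_0$, $a_1$.

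With $\|u\|_\infty\le C_0$ in hand, the rest is routine. From the equation $u''=g(x,u)+p$ and continuity of $g$ on the compact set $[0,1]\times[-C_0,C_0]$ we immediately get $\|u''\|_\infty\le \max_{[0,1]\times[-C_0,C_0]}|g| + \|p\|_\infty =: C_2$. For $u'$, integrate $u''$ from a point where $u'$ is controlled: by the mean value theorem there is $\xi\in(0,1)$ with $u'(\xi)=u(1)-u(0)$, so $|u'(\xi)|\le 2C_0$, and then $|u'(x)|\le |u'(\xi)| + \int_0^1|u''|\le 2C_0 + C_2=:C_1$. (Alternatively one can use the boundary condition directly: $|u'(0)|=a_0|u(0)|\le a_0C_0$, then integrate.) Setting $C:=C_0+C_1+C_2$ gives $\|u\|_{C^2}\le C$.

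The main obstacle is the sign bookkeeping at the endpoints in the first stage: one must be careful that the radiation conditions \eqref{radiation}, with $a_0,a_1>0$, genuinely prevent the extremum from sitting on the boundary, and this is exactly where positivity of both $a_0$ and $a_1$ — rather than an arbitrary Robin condition — is used. Once that is clear, superlinearity does all the work, and notably no monotonicity or positivity of $p$ is needed, consistent with the hypotheses of the statement (only \eqref{superlin} is assumed). One subtlety worth spelling out: in the contradiction argument the constant beyond which $g(x,s)+p(x)>0$ for $|s|$ large is uniform in $x$ precisely because the limit in \eqref{superlin} is uniform in $x\in[0,1]$ and $p$ is bounded; this is what makes the threshold $C_0$ independent of the particular solution.
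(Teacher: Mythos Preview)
Your argument has a genuine gap at the right endpoint. The claim that $u_n'(1)=a_1u_n(1)>0$ ``rules out the maximum being at $x=1$'' is false: if the maximum of $u_n$ on $[0,1]$ is attained at $x=1$, then $u_n$ is nondecreasing as $x\to 1^-$, so $u_n'(1)\ge 0$ --- and this is \emph{perfectly compatible} with $u_n'(1)=a_1u_n(1)>0$. The same error occurs in the negative-minimum case: $u_n'(1)=a_1u_n(1)<0$ is exactly what one expects at a boundary minimum, not a contradiction. The asymmetry between the two endpoints is real. The condition $u'(0)=a_0u(0)$ with $a_0>0$ does exclude a positive maximum (or negative minimum) at $x=0$, because it forces $u$ to move \emph{away} from the extremal value as you enter the interval; but the condition at $x=1$, with the same sign, says $u$ was moving \emph{toward} the extremal value as you approached $x=1$, which is precisely the behaviour at a boundary extremum. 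So the ``sign bookkeeping'' you flag as the main obstacle is in fact wrong at $x=1$, and nothing else in your outline covers the case $\|u_n\|_\infty=|u_n(1)|$.

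The paper sidesteps this by using an integral estimate instead of a pointwise maximum-principle argument: multiply the equation by $u$ and integrate by parts to get
\[
a_1u(1)^2-a_0u(0)^2=\int_0^1\bigl(u'^2+g(x,u)u+pu\bigr)\,dx,
\]
rewrite the boundary term as $\int_0^1\varphi'(x)\,dx$ with $\varphi(t)=[(a_1-a_0)t+a_0]u(t)^2$, estimate it by $C_\varepsilon\|u\|_{L^2}^2+\varepsilon\|u'\|_{L^2}^2$, and then use superlinearity in the form $g(x,u)u\ge M u^2-K$ to absorb everything and obtain an $H^1$ (hence $L^\infty$) bound. Your second stage --- bounding $u'$ and $u''$ once $\|u\|_\infty$ is controlled --- is correct and matches the paper. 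A maximum-principle route can likely be repaired by tracking $u$ backward from a large boundary value and exploiting the convexity forced by superlinearity, but as written the proof is incomplete.
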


\begin{proof}
Let $u$ be a solution. Multiply the equation by $u$ and integrate to obtain
$$
a_1u(1)^2 - a_0u(0)^2 = \int^1_0 [u'(x) ^2  +  g(x,u(x))u(x)  
+ p(x)u(x)]\, dx.
$$
Setting 
$\varphi(t):= [(a_1-a_0)t+a_0]u(t) ^2$, it is seen that
$$a_1u(1)^2 - a_0u(0)^2=\int_0^1\varphi'(x)\, dx \le 
\frac C\ee \|u\|_{L ^2} ^2 + 
\ee\|u'\|_{L ^2} ^2
$$ 
for arbitrary $\ee>0$ and 
$C$ depending on $\ee$, $a_1$ and $b_1$. Choose for example 
$\ee =\frac 12$ and set $M> 2C + \frac 12$, then by superlinearity there exists a constant $K$ 
(depending only on 
$M$ and $\|A\|_{L^2}$) such
that
$$
\frac 12 \|u'\|_{L ^2} ^2 + 
2C\|u\|_{L ^2} ^2 \ge 
\|u'\|_{L ^2} ^2
+ M\|u\|_{L ^2} ^2 - K.
$$
This implies $\|u\|_\infty\le \|u\|_{H^1} \le \sqrt{2K}$ 
and the proof follows using (\ref{eq}).

\end{proof}

Next, we may state an uniqueness result in terms of the first eigenvalue $\lambda_1$ of the (self-adjoint) linear operator 
$-u''$ under the boundary conditions (\ref{radiation}). To this end, let us simply recall that, by the standard Sturm-Liouville theory, 
$\lambda_1$ can be computed as the minimum of $-\int_0^1u''u\, dx $
over all the smooth functions satisfying (\ref{radiation}) such that $\|u\|_{L^2}=1$. 

\begin{thm}
\label{uniq}
Assume there exists an 
interval $I\subset \mathbb R$ such that, for all $u\in I$, 
\begin{equation}
   \label{H_1} 
\hbox{$\frac{\partial g}{\partial u}(x,u) \geq -\lambda_1$ for all $x\in [0,1]$} 
   \end{equation}
and the inequality is strict for some $x$ independent of $u$. 
Then 
(\ref{eq})-(\ref{radiation}) has at most one solution $u$ such that $u(x)\in I$ for all $x$. 
\end{thm}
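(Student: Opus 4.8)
The plan is to argue by contradiction using the variational characterization of $\lambda_1$. Suppose $u$ and $v$ are two distinct solutions of (\ref{eq})-(\ref{radiation}) with $u(x),v(x)\in I$ for all $x$, and set $w:=u-v$. Subtracting the two equations and applying the mean value theorem in the $u$-variable pointwise, one finds that $w$ solves the linear equation
$$
w''(x)=\frac{\partial g}{\partial u}\bigl(x,\xi(x)\bigr)\,w(x)
$$
for some intermediate value $\xi(x)$ lying between $u(x)$ and $v(x)$, hence $\xi(x)\in I$ for all $x$. Moreover $w$ inherits the radiation boundary conditions, i.e. $w'(0)=a_0w(0)$ and $w'(1)=a_1w(1)$, since both $u$ and $v$ satisfy them. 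Note $w\not\equiv 0$.

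Next I would test this identity against $w$ itself and integrate over $[0,1]$:
$$
\int_0^1 w''(x)w(x)\,dx=\int_0^1 \frac{\partial g}{\partial u}\bigl(x,\xi(x)\bigr)\,w(x)^2\,dx.
$$
Using hypothesis (\ref{H_1}), the right-hand side is bounded below by $-\lambda_1\int_0^1 w(x)^2\,dx$. On the other hand, normalizing $\widetilde w:=w/\|w\|_{L^2}$, the quantity $-\int_0^1 \widetilde w''\widetilde w\,dx$ is at least $\lambda_1$ by the Sturm--Liouville minimum characterization recalled before the statement (the function $w$ satisfies the required boundary conditions, so it is admissible in the minimization). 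Rescaling back, $-\int_0^1 w''w\,dx\ge \lambda_1\int_0^1 w^2\,dx$. Combining the two inequalities forces
$$
\int_0^1\left[\frac{\partial g}{\partial u}\bigl(x,\xi(x)\bigr)+\lambda_1\right]w(x)^2\,dx=0,
$$
and since the integrand is nonnegative it must vanish a.e.; in particular $w$ must be (a scalar multiple of) the first eigenfunction $\phi_1$, which does not change sign, and equality in the eigenvalue bound forces $\frac{\partial g}{\partial u}(x,\xi(x))=-\lambda_1$ at every $x$ where $w(x)\ne 0$, i.e. for all $x$ since $\phi_1$ vanishes nowhere in $(0,1)$ (and at the endpoints the boundary conditions with $a_0,a_1>0$ keep it nonzero). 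This contradicts the assumption that the inequality in (\ref{H_1}) is strict at some fixed $x$ independent of $u$, because that $x$ can be taken inside the interval where $w\ne 0$.

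The main obstacle is the rigidity step at the end: one must be careful that equality throughout the chain of inequalities genuinely pins down $w$ as the principal eigenfunction and pins $\xi(x)$ down so that $\frac{\partial g}{\partial u}(x,\xi(x))=-\lambda_1$ precisely at the point where the hypothesis says the inequality is strict. The clean way to handle this is to observe that if $-\int_0^1 w''w\,dx=\lambda_1\|w\|_{L^2}^2$ with $w$ admissible, then $w$ is an eigenfunction for $\lambda_1$ (equality in the Rayleigh quotient is attained only at minimizers, which satisfy the Euler--Lagrange equation $-w''=\lambda_1 w$); comparing with $w''=\frac{\partial g}{\partial u}(x,\xi(x))w$ gives $\bigl(\frac{\partial g}{\partial u}(x,\xi(x))+\lambda_1\bigr)w(x)=0$ for all $x$, and since the principal eigenfunction has no interior zeros, the bracket vanishes identically on $(0,1)$, hence at the distinguished $x$ — the desired contradiction. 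This also explains why it is the strictness "for some $x$ independent of $u$" that is exactly what is needed: it rules out precisely the degenerate equality case.
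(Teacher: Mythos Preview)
Your proof is correct and follows essentially the same approach as the paper's: set $w=u_1-u_2$, derive $w''=\frac{\partial g}{\partial u}(x,\xi(x))\,w$, multiply by $w$, integrate, and play the hypothesis against the variational characterization of $\lambda_1$. The only difference is in the endgame: the paper localizes to an open interval $J$ where the strict inequality holds, obtains the strict chain $0<\int_0^1(w''w+\lambda_1 w^2)\,dx\le 0$ unless $w|_J\equiv 0$, and then concludes $w\equiv 0$ by ODE uniqueness; you instead analyze the equality case of the Rayleigh quotient to identify $w$ with a first eigenfunction and reach the contradiction via its nonvanishing. Both closures are standard and equally valid.
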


\begin{proof}
Let $u_1, u_2:[0,1]\to I$ be solutions of (\ref{eq})-(\ref{radiation}) and define $w:=u_1-u_2$, then $w$ satisfies the boundary condition and
$$
w''(x)=g(x,u_1(x))-g(x,u_2(x))
          = \frac{\partial g}{\partial u}(x,\xi(x)) w(x)$$
for some $\xi(x)$ between $u_1(x)$ and $u_2(x)$.
Fix an open interval $J\neq \emptyset$ 
such that $\frac{\partial g}{\partial u}(x,\xi(x)) > -\lambda_1$ for $x\in J$ and suppose $w\not\equiv 0$ in $J$, then 
$$
0=\int_0^1 \left(w''w - \frac{\partial g}{\partial u}(x,\xi(x))w^2\right)dx 
< \int_0^1 (w''w + \lambda_1 w^2)\, dx\le 0,
$$
because $\lambda_1$ is the first eigenvalue. 
This contradiction proves that $w\equiv 0$ over $J$ and consequently $w=0$. 

\end{proof}

\begin{rem}

{As shown in \cite{AK}, $\lambda_1$ is a strictly decreasing continuous  function of $a_1$ and, moreover, $\lambda_1\ge 0$ 
if and only if $a_1\le  \frac{a_0}{a_0+1}$. In particular, 
when (\ref{crec}) holds, the latter inequality is a sufficient condition for uniqueness. 
However, it was proved in \cite{AKR3}, 
the (sharp) condition for 
uniqueness in the particular case (\ref{concreto}) is weaker, namely: $a_1\le a_0$. 
This is due to fact that, in this specific case, 
it is verified that $\lambda_1 \ge -a_1^2$
and hence
$$-\lambda_1\le a_1^2\le L(x) + 3Ku^2 =
\frac{\partial g}{\partial u}(x,u).
$$ 
}

\end{rem}

The next result shows that the failure of (\ref{H_1}) does not necessarily imply multiplicity: this fact was already observed in \cite{AKR3}
where, as mentioned, it was proven the solution of (\ref{concreto})-(\ref{radiation})
is unique also when $A$ is large. 
The latter property can be easily deduced in the general case with $p\equiv A$ from the next two 
results. The first 
of them establishes that, for $p$ large, 
solutions are negative; the second one proves that, 
under suitable assumptions, there cannot be two solutions with the same sign.

\begin{thm}
\label{agrande} 
Let (\ref{superlin}) hold. Then
there exists $p_0$ such that, if
$p(x)\ge p_0$ for all $x\in [0,1]$,
then
all the solutions 
of  (\ref{eq})-(\ref{radiation}) are negative.  
\end{thm}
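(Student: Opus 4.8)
The plan is to show that for $p$ large and uniform enough, no solution can take nonnegative values anywhere, by combining an a priori $L^\infty$ bound with a pointwise argument at an interior maximum. First I would invoke Theorem \ref{bound}: there is a constant $C$, \emph{independent of $p$ as long as the superlinearity} (\ref{superlin}) \emph{holds}, so actually I need to be a little careful here — the constant in Theorem \ref{bound} may depend on $\|p\|_\infty$. A cleaner route is to argue by contradiction directly. Suppose $u$ is a solution with $\max_{[0,1]} u =: \mu \ge 0$. If the maximum is attained at an interior point $x_0\in(0,1)$, then $u''(x_0)\le 0$, so from (\ref{eq}), $0 \ge u''(x_0) = g(x_0,u(x_0)) + p(x_0) \ge g(x_0,0) + p(x_0) = p(x_0)$, using $g(x,0)=0$ together with monotonicity (\ref{crec}) — wait, (\ref{crec}) is not among the hypotheses of Theorem \ref{agrande}, only (\ref{superlin}) is. So instead I would use: if $u(x_0)\ge 0$ then $g(x_0,u(x_0))\ge g(x_0,0)=0$ requires monotonicity, which we don't have here. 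The honest tool available under (\ref{superlin}) alone is superlinearity itself: $g(x,u)/u \to +\infty$, which controls $g$ for \emph{large} $|u|$, not near $0$.

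So here is the actual plan. By Theorem \ref{bound} applied with a fixed reference right-hand side, or more robustly by a direct energy estimate, one first shows that \emph{if} a nonnegative-somewhere solution existed for large $p$, its sup norm would have to be large — but that would make $g(x,u)$ itself large and positive at the maximum, and then the equation at an interior maximum forces $u''(x_0)=g(x_0,u(x_0))+p(x_0)>0$, contradicting $u''(x_0)\le 0$. Concretely: suppose toward a contradiction there is a sequence $p_n$ with $\min p_n \to +\infty$ and solutions $u_n$ with $\max u_n =: \mu_n \ge 0$, attained at $x_n$. I split into cases. If $x_n \in \{0,1\}$, the boundary conditions (\ref{radiation}) with $a_0,a_1>0$ give $u_n'(x_n) = a_i u_n(x_n) \ge 0$ at a maximum, which combined with $u_n'(0)\ge 0$ being incompatible with $x_n=0$ being a max (need $u_n'(0)\le 0$ unless $u_n\equiv$ const near $0$), forces $u_n(x_n)=0$; then $u_n\le 0$ on $[0,1]$ with equality somewhere, and a Hopf-type / strong maximum argument on $u_n'' = g(x,u_n)+p_n \ge p_n > 0$ near that point gives a contradiction since $u_n$ would be strictly convex there. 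If $x_n\in(0,1)$, then $u_n''(x_n)\le 0$ yields $g(x_n,\mu_n) \le -p_n < 0$; but $g(x,0)=0$ and continuity of $g$ give a uniform lower bound $g(x,u)\ge -\delta$ for $u\in[0,\eta]$, so $\mu_n > \eta$ for $n$ large, whence by superlinearity $g(x_n,\mu_n) \ge \mu_n \cdot \frac{g(x_n,\mu_n)}{\mu_n} \to$ is not directly useful in sign — rather, superlinearity gives $g(x,u)\ge u$ for $u\ge R$ with $R$ uniform, so $g(x_n,\mu_n)\ge \mu_n >0$ once $\mu_n\ge R$, contradicting $g(x_n,\mu_n)\le -p_n<0$. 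It remains only to rule out $\eta \le \mu_n \le R$: on this compact range $g(x,u)$ is bounded below by some constant $-C_R$, so $-p_n \ge g(x_n,\mu_n)\ge -C_R$ forces $p_n \le C_R$, impossible for large $n$.

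Assembling these cases: choosing $p_0$ larger than $\max\{\delta, C_R, \sup_{[0,1]\times[0,R]} |g|\}$ plus a safety margin (and large enough that the boundary-point case is excluded by the strict-convexity argument), we conclude every solution satisfies $u(x)<0$ for all $x$. I would present the interior-maximum case as the core and the boundary cases as short remarks using $a_0,a_1>0$.

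The main obstacle, and the place I would spend the most care, is the boundary-maximum case: when the maximum of $u$ is at $x=0$ or $x=1$, the second-derivative test is unavailable, and one must instead exploit the sign of $u'$ dictated by (\ref{radiation}) together with convexity coming from $u'' = g(x,u)+p > 0$ (valid once we know $u$ is nonnegative near that endpoint, hence $g\ge -\delta$ there and $p_0$ beats $\delta$). The clean statement is: if $u(x_*)\ge 0$ at an endpoint $x_*$ and $u$ is a solution with $p\ge p_0>\delta$, then in a neighborhood of $x_*$ where $u\ge 0$ we have $u''>0$, so $u$ is strictly convex there; combined with $u'(0)=a_0u(0)\ge 0$ (forcing $u$ increasing from $0$) or $u'(1)=a_1u(1)\ge 0$ (forcing $u$ still increasing at $1$), one propagates nonnegativity and convexity across $[0,1]$ and lands in the interior-maximum contradiction already handled. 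I expect the rest — the interior case and the choice of $p_0$ — to be routine.
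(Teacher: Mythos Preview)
Your interior-maximum argument is correct: if $\mu=\max u\ge 0$ is attained at $x_0\in(0,1)$, then $u''(x_0)\le 0$ forces $g(x_0,\mu)\le -p(x_0)$, and for $p_0$ larger than $\sup\{-g(x,u): x\in[0,1],\,0\le u\le R\}$ (finite by continuity, with $R$ chosen so that superlinearity gives $g(x,u)>0$ for $u\ge R$) this is impossible. The case of a maximum at $x=0$ is also handled, since $u'(0)=a_0\mu\ge 0$ together with $u'(0)\le 0$ forces $\mu=0$, and then $u''(0)=p(0)>0$ contradicts $u\le 0$ near $0$.

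The genuine gap is the case where the maximum sits at $x=1$ with $u(1)>0$. Here the first-order condition $u'(1)\ge 0$ is \emph{compatible} with the boundary condition $u'(1)=a_1u(1)>0$, so nothing forces $\mu=0$. Your proposed fix---propagate nonnegativity and convexity across $[0,1]$ and ``land in the interior-maximum contradiction''---does not work: once you have shown $u>0$, $u'>0$, $u''>0$ on all of $[0,1]$, the maximum is at $x=1$, not in the interior, and there is no contradiction to land in. Convexity alone cannot exclude such a profile; for instance, $u(x)=e^{cx}$ with $c=a_0=a_1$ is positive, increasing, convex, and satisfies both boundary conditions. The obstruction must come from the equation, and mere positivity of $u''$ (which is all your choice of $p_0$ guarantees) is not enough.

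What the paper does, and what your argument is missing, is a \emph{quantitative} use of superlinearity: for any $M>0$ one has $g(x,u)\ge Mu+N_M$ for $u\ge 0$ with $N_M>-\infty$, so choosing $p_0>-N_M$ yields $u''>Mu$ on the set $\{u\ge 0\}$. Multiplying by $u'>0$ and integrating from the first zero (or from $0$ if $u(0)>0$) gives $u'(1)^2>M\bigl(u(1)^2-u(x_0)^2\bigr)+u'(x_0)^2$, and with $M$ chosen larger than roughly $a_1^2$ this contradicts $u'(1)=a_1u(1)$. In short, you need $u''>Mu$ with $M$ beating $a_1^2$, not just $u''>0$; this is the step your plan lacks.
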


\begin{proof}
Due to the superlinearity of $g$, 
for each $M\ge 0$ we may define the quantity
$$N_M:= \inf_{x\in [0,1], u\geq 0}\{g(x,u)-Mu\}>-\infty.$$
Then 
\begin{equation}
 \label{cotaf}
 g(x,u)\ge Mu +N_M
\end{equation}
for all $u\geq 0$. 
Let $M>0$ to be determined, fix 
$p_0> -N_M$ and let
$u$ be a solution of (\ref{eq})-(\ref{radiation}) such that $u(x)\ge 0$ for some $x\in [0,1]$. 
In view of (\ref{cotaf}), the inequality
$ u''(x)\geq g(x,u(x))+p_0$
implies that 
\begin{equation}
 \label{eq2}
u''(x)>Mu(x)
\end{equation}
whenever $u(x)\ge 0$. 
We deduce that, if $x_0 \in [0,1]$ 
is such that $u(x_0)$ and $u'(x_0)$ are nonnegative, then
$u(x)$ and $u'(x)$ are strictly positive for $x>x_0$. 
Multiply (\ref{eq2}) by $u'$ and integrate to obtain, for $x>x_0$:
\begin{equation}
\label{cotax}
u'(x)^2 > u'(x_0)^2+M(u(x)^2-u(x_0)^2).
\end{equation}

If $u(0)> 0$, then $u'(0)>0$ and
$$u(1)^2-u(0)^2= \int_0^1 2u(x)u'(x)dx >2a_0 u(0)^2.$$
Thus, 
\begin{equation}
 \label{cotau1}
 u(1)^2-u(0)^2 > \frac{2a_0}{1+2a_0}u(1)^2
\end{equation}
and fixing $M=a_1^2\frac{1+2a_0}{2a_0}$  we obtain, from 
(\ref{cotax}) and (\ref{cotau1}):
$$a_1^2u(1)^2 > 
 M\frac{2a_0}{1+2a_0} u(1)^2 = a_1^2 u(1)^2.$$
This contradiction proves 
that there are no positive solutions when $p_0>-N_M$. 

On the other hand, if $u(0)\le 0$ 
then $u$ vanishes at a (unique) value $x_0$, with
$u'(x_0)\ge 0$.  
Fix $M=a_1^2$, then (\ref{cotax}) yields
$$ a_1^2u(1)^2=u'(1)^2 > u'(x_0)^2 + a_1^2u(1)^2\ge  a_1^2u(1)^2, 
$$
a contradiction.

\end{proof}

\begin{thm}
\label{unicanegativa}
Assume there exists an interval 
$I\subset \mathbb R_{\ne 0}$ such that
   \begin{equation}
      \hbox{\label{H_2} $\frac{\partial g}{\partial u}(x,u)> \frac{g(x,u) +p(x)}{u}$}
   \end{equation}
holds for all $x\in [0,1]$ and $u\in I$.
Then there exists at most one solution $u$ of (\ref{eq})-(\ref{radiation}) such that $u(x)\in I$ for all $x$.
\end{thm}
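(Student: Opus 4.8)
The plan is to argue by a Sturm-type comparison using the Wronskian of two solutions, exploiting that hypothesis (\ref{H_2}) forces the quotient
$$\psi(x,u):=\frac{g(x,u)+p(x)}{u}$$
to be strictly monotone in $u$ along $I$. Since $p$ does not depend on $u$, (\ref{H_2}) reads $\frac{\partial g}{\partial u}(x,u)>\psi(x,u)$, and a direct computation gives $\frac{\partial}{\partial u}\psi(x,u)=\frac{1}{u}\left(\frac{\partial g}{\partial u}(x,u)-\psi(x,u)\right)$, which therefore has the sign of $u$; thus $\psi(x,\cdot)$ is strictly increasing on $I$ if $I\subset(0,+\infty)$ and strictly decreasing on $I$ if $I\subset(-\infty,0)$.

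Suppose $u_1,u_2$ are solutions of (\ref{eq})-(\ref{radiation}) with $u_i(x)\in I$ for all $x$. Since $0\notin I$ and $I$ is an interval, each $u_i$ has constant sign, and as both take values in $I$ they share that sign; in particular $u_1u_2>0$ on $[0,1]$. Set $W:=u_1'u_2-u_1u_2'$. Using $u_i''=g(x,u_i)+p(x)$ one gets
$$W'=u_2\big(g(x,u_1)+p(x)\big)-u_1\big(g(x,u_2)+p(x)\big)=u_1u_2\big(\psi(x,u_1)-\psi(x,u_2)\big),$$
while the boundary conditions (\ref{radiation}) give $W(0)=a_0u_1(0)u_2(0)-a_0u_1(0)u_2(0)=0$ and likewise $W(1)=0$.

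Assume, for contradiction, that $u_1\not\equiv u_2$. After relabelling, the set $\Omega:=\{x\in[0,1]:u_1(x)>u_2(x)\}$ is nonempty and relatively open; pick a connected component $(\alpha,\beta)\subset\Omega$. On $(\alpha,\beta)$ we have $u_1>u_2$ with both in $I$, so by the strict monotonicity of $\psi(x,\cdot)$ the factor $\psi(x,u_1)-\psi(x,u_2)$ is nonzero with sign equal to the common sign of $u_1,u_2$; since $u_1u_2>0$, it follows that $W'$ has that same constant nonzero sign throughout $(\alpha,\beta)$, so $W$ is strictly monotone on $[\alpha,\beta]$. Now evaluate $W$ at the endpoints, writing $w:=u_1-u_2$. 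If $\alpha>0$ then $w(\alpha)=0$, so $u_1(\alpha)=u_2(\alpha)$ and $w'(\alpha)\ge0$, whence $W(\alpha)=u_1(\alpha)w'(\alpha)$; if $\alpha=0$ then $W(\alpha)=0$. Symmetrically, if $\beta<1$ then $W(\beta)=u_1(\beta)w'(\beta)$ with $w'(\beta)\le0$, and if $\beta=1$ then $W(\beta)=0$. In either sign regime this contradicts the strict monotonicity of $W$: e.g. if the solutions are positive, then $W'>0$ forces $W(\beta)>W(\alpha)$ whereas $W(\alpha)\ge0\ge W(\beta)$; the negative case is identical with all inequalities reversed. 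Hence $u_1\equiv u_2$.

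The only delicate point is bookkeeping: one must handle uniformly the two sign regimes for $I$ and, within each component, the four possibilities for whether each endpoint is an interior zero of $w$ or one of the boundary points $0$, $1$ — the radiation conditions are used precisely to control $W$ in the latter situation. No analytic difficulty remains once the identity for $W'$ and the monotonicity of $\psi$ are established.
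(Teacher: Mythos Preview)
Your proof is correct and follows essentially the same Sturm--Wronskian comparison as the paper: both arguments rest on the monotonicity of $\psi(x,u)=(g(x,u)+p(x))/u$ implied by (\ref{H_2}), compute the derivative of $W=u_1'u_2-u_1u_2'$, and exploit that the radiation conditions force $W(0)=W(1)=0$. The only cosmetic difference is that the paper argues from $x=0$ to the first crossing point, whereas you work on an arbitrary connected component of $\{u_1>u_2\}$ and treat the two sign regimes for $I$ explicitly; your bookkeeping is in fact slightly more complete than the paper's.
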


\begin{proof}
Let $u_1, u_2:[0,1]\to I$ be two different solutions, then $u_1(0)\neq u_2(0)$. Suppose for example that $u_1<u_2$ 
over $[0,x_0)$, then 
$$
u_1''(x) = \frac{g(x,u_1(x))+p(x)}{u_1(x)} u_1(x) < 
\frac{g(x,u_2(x))+p(x)}{u_2(x)} u_1(x)
$$
and hence
$$
u_1''(x)u_2(x) > u_1(x)u_2''(x)\qquad x<x_0.
$$
We conclude that 

\begin{equation}
\label{x0}
u_1'(x_0)u_2(x_0) 
> u_1(x_0)u_2'(x_0),
\end{equation}
and a contradiction yields if 
$x_0=1$. Thus, we may suppose that $u_1$ and $u_2$ meet for the first time at $x_0$, then $u_1(x_0)=u_2(x_0)$ and 
$u_1'(x_0)\ge u_2'(x_0)$. Again, this contradicts 
(\ref{x0}).

\end{proof}

\begin{rem}
Condition (\ref{H_2}) implies that the function $\frac{g(x,u)+p(x)}u$ increases in $u$ when $I\subset \mathbb R_+$ and decreases when $I\subset \mathbb{\R}_-$. 
Moreover, if $0\in \partial I$ then 
$sp\le 0$, where $s$ denotes the 
sign of the elements of $I$. 
In particular, if the condition 
holds for all $u\neq 0$, then $p=0$. This case 
is well known in the literature (see e.g. \cite{CCN}) and implies 
that if $u_0\ne 0$ is a critical point of the 
associated functional $\mathcal J$, then $u_0$ is 
transversal to the Nehari 
manifold introduced after the pioneering work \cite{N}, namely: 
$$\mathcal N:=\{u\in H^1(0,1)\setminus\{0\} :D\mathcal J(u)(u) =0\}.$$ Indeed, setting 
$\mathcal I(u):= D\mathcal J(u)(u)$ it is readily seen that $T_{u_0}\mathcal N= ker(D\mathcal I(u_0))$ and  
$D\mathcal I(u_0)(u_0)>0$. 
For the particular case of problem (\ref{concreto}), condition (\ref{H_2}) simply reads $\frac A{u^3}<2K$, so the previous result applies with $I=(-\infty,0)$ and $I=\left(\sqrt[3]{\frac A{2K}},+\infty\right)$.

\end{rem}

The next theorem generalizes another 
result from \cite{AKR3}, 
concerning the behaviour of the solutions as $p$ increases. 
We know that all solutions are negative if $p\ge p_0\gg 0$ and it is readily verified (e.g. by the method of upper and lower solutions) that a solution always exists; however, if the assumptions of Theorem \ref{uniq} or  
Theorem \ref{unicanegativa} are not satisfied, then 
there might be more than one negative solution. 
As we shall see, all possible solutions tend uniformly to $-\infty$ as $p$ tends uniformly to $+\infty$. 
In order to emphasize the dependence on $p$, any solution shall be denoted $u_p$, despite the fact that it might not be unique.

\begin{thm}
\label{uniformly}
Assume that (\ref{superlin}) holds and let $u_p$ be a solution of (\ref{eq})-(\ref{radiation}).
Then 
$u_p \rightarrow -\infty$ uniformly when $p\rightarrow +\infty$ uniformly.
\end{thm}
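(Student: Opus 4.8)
The plan is to argue by contradiction and to exploit the equation pointwise at the maximum of a solution. Suppose the assertion fails for some $M>0$: then there are data $p_n\in C([0,1])$ with $\min_{[0,1]}p_n\to+\infty$ and solutions $u_n$ of (\ref{eq})-(\ref{radiation}) with $p=p_n$ such that $m_n:=\max_{[0,1]}u_n\ge -M$ for every $n$. By Theorem \ref{agrande}, once $\min_{[0,1]}p_n$ is large every solution is negative, so in fact $m_n\in[-M,0)$ for $n$ large; choose $x_n\in[0,1]$ with $u_n(x_n)=m_n$. I would then show that each of the possibilities $x_n\in(0,1)$, $x_n=1$, $x_n=0$ leads to a contradiction for $n$ large.

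If $x_n\in(0,1)$, then $u_n''(x_n)\le 0$; but $g$ is nondecreasing in $u$ and $m_n\ge-M$, so $g(x_n,m_n)\ge-\|g(\cdot,-M)\|_\infty$, and the equation gives $u_n''(x_n)=g(x_n,m_n)+p_n(x_n)>0$ once $\min_{[0,1]}p_n>\|g(\cdot,-M)\|_\infty$. If $x_n=1$, the one‑sided derivative at a right‑endpoint maximum satisfies $u_n'(1)\ge 0$, while the radiation condition forces $u_n'(1)=a_1 m_n<0$. Both are impossible, so $x_n=0$ for all large $n$, and therefore $u_n'(0)=a_0 m_n<0$.

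For the remaining case I would set $c:=\|g(\cdot,-(M+1))\|_\infty$ and $\gamma_n:=\min_{[0,1]}p_n-c\to+\infty$. By monotonicity of $g$, on any interval $[0,t]$ on which $u_n\ge-(M+1)$ one has $u_n''\ge\gamma_n$, and integrating twice from $0$ (using $u_n(0)=m_n$, $u_n'(0)=a_0 m_n$) gives there
\[
u_n(x)\;\ge\; q_n(x):=m_n+a_0 m_n x+\frac{\gamma_n}{2}\,x^2 .
\]
Since $|m_n|\le M$, the upward parabola $q_n$ has minimum over $x\ge 0$ equal to $m_n-\frac{a_0^2 m_n^2}{2\gamma_n}\ge -M-\frac{a_0^2M^2}{2\gamma_n}>-(M+1)$ for $n$ large; hence $u_n$ cannot drop to the level $-(M+1)$ on $[0,1)$, and the bound $u_n\ge q_n$ extends to all of $[0,1]$. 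Evaluating at $x=1$ then gives $u_n(1)\ge q_n(1)=m_n(1+a_0)+\frac{\gamma_n}{2}\to+\infty$, contradicting $u_n(1)\le m_n<0$. This exhausts the cases and proves the theorem.

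The step I expect to be the main obstacle is the last one: the estimate $u_n''\ge\gamma_n$ is only available where $u_n$ has not yet dropped below the threshold $-(M+1)$, so one must genuinely play the largeness of $\gamma_n$ against the a priori bound $|m_n|\le M$ to make sure the parabolic lower bound propagates to the whole interval before it is evaluated at the endpoint. The interior and right‑endpoint cases, by contrast, are immediate from the sign of $u''$ at a maximum and from the boundary condition together with $m_n<0$, and Theorem \ref{agrande} is used only to guarantee $m_n<0$. (An alternative is to test the equation against the positive first eigenfunction $\phi$ of $-u''$ under (\ref{radiation}); the boundary terms cancel because $\phi$ obeys the same conditions, giving $\int_0^1\big(g(x,u_p)+\lambda_1 u_p\big)\phi\,dx=-\int_0^1 p\,\phi\,dx$, which shows $u_p$ is very negative in an averaged sense — but upgrading this to a uniform estimate seems to require the pointwise argument above in any case.)
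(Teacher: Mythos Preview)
Your proof is correct and follows essentially the same route as the paper's: rule out an interior or right-endpoint maximum directly from the equation and the boundary condition, then for a maximum at $x=0$ propagate a lower bound on $u_n$ forward using the largeness of $p$ to reach a contradiction with $u_n<0$. The only cosmetic differences are that the paper uses the threshold $-(1+a_0)M$ and the one-step bound $u_p'>-Ma_0$ (yielding $u_p'(1)>0$), whereas you integrate twice to obtain a parabolic lower barrier and conclude $u_n(1)\to+\infty$.
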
 
\begin{proof}
Let $p\ge p_0$ for some large constant $p_0$. From Theorem \ref{agrande}, we may assume $u_p<0$. 
Fix $x_p$ such that $\max_{x\in [0,1]} u_p(x)=u_p(x_p)$, then 
$x_p<1$. Suppose $u_p(x_p)>-M$ and fix $p_0$ large enough, such that,  
\begin{equation}
\label{alosufgrande}
g(x,u)+p_0 > Ma_0, \;\;\; \hbox{ for all }u\ge -(1+a_0)M.
\end{equation}
It follows that $x_p=0$. Consider the maximum value 
$\delta \leq 1$ such that $u_p''(x)\geq 0$ for all $x\in [0,\delta]$, then $u'_p(x)\ge u'_p(0)=a_0 u_p(0) >-Ma_0$, for $x\leq \delta$.
 Hence, $u_p(\delta) > u_p(0)-\delta M a_0 \geq -M(1+a_0)$ 
 and by (\ref{alosufgrande}) we conclude that 
 $u''_p(\delta)>0$. 
 Thus, $\delta =1$ 
  and, in particular, $u_p(x)> -M(1+a_0)$. 
Using (\ref{alosufgrande}) again, it follows that $u_p''(x) > Ma_0$ for all $x$. Then 
$u'_p(1) > u'_p(0)+ Ma_0>0$, a contradiction.

\end{proof}

Combining the previous result with 
Theorems \ref{uniq} and \ref{unicanegativa} we deduce that, in fact, the solution is typically unique when $p$ is large. 
Indeed, due to superlinearity we observe that, on the one hand, 
$\frac{\partial g}{\partial u}(x,u)$ cannot 
remain bounded from above as $u\to -\infty$ and, on the other hand,  the function $\frac{g(x,u)}{u}$ cannot be 
increasing in $u$ over any interval $(-\infty,C)$. In other words, it is reasonable to  expect that either 
condition (\ref{H_1}) holds 
or  $ \frac{\partial g}{\partial u}(x,u)-\frac{g(x,u)}u \ge \frac ku$
when $u\ll 0$. 
Any of these conditions, which are   
fulfilled in the particular case (\ref{concreto}), ensures the   applicability of  Theorems \ref{uniq} or \ref{unicanegativa}
when $p$ is large. Thus, the following corollary is obtained:

\begin{cor}
Assume that (\ref{superlin}) holds. Moreover, assume there exists $C\le 0$ such that one of the following conditions holds:

\begin{enumerate}
\item 
Condition (\ref{H_1}) 
holds for all $u\le C$,
\item 
$$\sup_{x\in[0,1],u\le C} u\frac{\partial g}{\partial u}(x,u)-g(x,u)<+\infty.$$
\end{enumerate}
Then there exists $p_0$ such that problem (\ref{eq})-(\ref{radiation}) has a unique solution, which is negative,   
for all $p\ge p_0$. 
\end{cor}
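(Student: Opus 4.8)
The plan is to assemble the facts already established. First I would recall that a solution of (\ref{eq})-(\ref{radiation}) exists for every $p$ (for instance by the method of upper and lower solutions, as noted just before Theorem \ref{uniformly}), and that by Theorem \ref{agrande} there is a threshold $\tilde p_0$ such that every solution is strictly negative whenever $p(x)\ge \tilde p_0$ for all $x$. Next I would invoke Theorem \ref{uniformly}: since $\max_{[0,1]}u_p\to -\infty$ as $p\to +\infty$ uniformly, there is $p_0\ge \tilde p_0$ with the property that every solution $u_p$ satisfies $u_p(x)<C$ for all $x\in[0,1]$ as soon as $p(x)\ge p_0$ for all $x$ (when $C=0$ this already follows from strict negativity). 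Hence, for $p$ above this threshold, every solution takes its values in the interval $I:=(-\infty,C]$, which, in the second case, I would replace by $(-\infty,0)$ when $C=0$ so as to stay inside $\mathbb R_{\ne 0}$; it still contains all solutions.

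The second step is to check that $I$ meets the hypotheses of one of the two uniqueness theorems. Under assumption 1, condition (\ref{H_1}) (together with the strictness requirement of Theorem \ref{uniq}) holds on $I$ by hypothesis, so Theorem \ref{uniq} gives at most one solution with values in $I$. Under assumption 2, I would set $S:=\sup_{x\in[0,1],\,u\le C}\big(u\,\frac{\partial g}{\partial u}(x,u)-g(x,u)\big)<+\infty$ and, enlarging $p_0$ if necessary, assume $p_0>S$. Then for all $x\in[0,1]$ and all $u\in I$ (so that $u<0$) one has $u\,\frac{\partial g}{\partial u}(x,u)-g(x,u)\le S<p_0\le p(x)$; dividing by $u<0$ and reversing the inequality is precisely condition (\ref{H_2}) on $I$, so Theorem \ref{unicanegativa} again yields at most one solution with values in $I$.

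Finally I would combine the two steps: for $p(x)\ge p_0$ every solution lies in $I$, at most one solution lies in $I$, and at least one solution exists, so the solution is unique; it is negative by Theorem \ref{agrande}. There is no genuinely delicate point in this argument --- it is a bookkeeping combination of earlier statements. The only thing requiring a little care is to choose a single threshold $p_0$ that works simultaneously for Theorem \ref{uniformly} (to push the solutions below $C$) and, in the second case, for the inequality $p_0>S$, so that the same constant $p_0$ serves both roles.
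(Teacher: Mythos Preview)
Your argument is correct and matches the paper's own proof almost exactly: use Theorem~\ref{uniformly} to force all solutions below $C$, then in case~1 invoke Theorem~\ref{uniq} directly, and in case~2 observe that the supremum bound together with $p_0$ large yields condition~(\ref{H_2}) on $(-\infty,C]$, so Theorem~\ref{unicanegativa} applies. Your treatment is in fact slightly more careful than the paper's, since you mention existence explicitly and flag the strictness clause in Theorem~\ref{uniq} and the case $C=0$.
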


\begin{proof}
From Theorem 
\ref{uniformly}, there exists $\tilde p$ such that if $u$ is a solution for $p\ge \tilde p$ then $u(x)\le C$ for all $x$. 
If the first condition holds, then 
the proof follows directly from Theorem \ref{uniq}. 
Otherwise, there exists a constant $M$ such that 
$$
\frac{\partial g}{\partial u}(x,u)-\frac{g(x,u)}u > \frac{M}u$$
for all $u\le C$ and, by Theorem 
\ref{unicanegativa}, the result follows taking $p_0$ as the maximum value between $\tilde p$ and $M$. 
\end{proof}

\section{A shooting operator for problem (\ref{eq})-(\ref{radiation})}
\label{sec-shoot}

This section is devoted to proof Theorems \ref{multi} and \ref{shooting} by means of a shooting-type operator. To this end, let us firstly state the following 
lemma, which ensures that, if 
(\ref{crec}) holds,
then the graphs of two different solutions of (\ref{eq}) 
with initial 
condition $u'(0)=a_0u(0)$ do not intersect. More generally, 

\begin{lem}
\label{nosecruzan} 

Let $u_1$ and $u_2$ be solutions of (\ref{eq}) defined over an interval $[0,b]$ such that 
$u_1(0) >u_2(0)$ and $u_1'(0)>u_2'(0)$ and 
assume that (\ref{crec}) holds. 
Then $u_1>u_2$ and $u_1'>u_2'$ on $[0,b]$.    
\end{lem}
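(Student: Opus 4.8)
The plan is to reduce everything to the behavior of $w:=u_1-u_2$. By (\ref{eq}), $w''(x)=g(x,u_1(x))-g(x,u_2(x))$, so the structural fact that drives the whole argument is: \emph{at every point where $w>0$ we have $w''>0$}, and here the strictness is exactly what (\ref{crec}) provides (strict monotonicity of $g$ in $u$). The initial data give $w(0)>0$ and $w'(0)>0$, and the goal is to show these two strict inequalities propagate to all of $[0,b]$.

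Concretely, I would set
$$
x_0:=\sup\{\,x\in[0,b]\;:\;w>0\ \text{and}\ w'>0\ \text{on}\ [0,x]\,\},
$$
which is well defined and satisfies $x_0>0$ since $w(0)>0$, $w'(0)>0$ and $w,w'$ are continuous. On $[0,x_0)$ one has $w>0$, hence $w''>0$ there, so $w'$ is strictly increasing on $[0,x_0)$; consequently $w'(x)>w'(0)>0$ for $x\in(0,x_0)$ and, letting $x\to x_0^-$, $w'(x_0)\ge w'(0)>0$. Since $w'>0$ on $[0,x_0)$, the function $w$ is strictly increasing there, so likewise $w(x_0)\ge w(0)>0$. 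Thus both strict inequalities in fact hold at $x_0$ as well.

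Then I would argue by contradiction that $x_0=b$: if $x_0<b$, the strict inequalities $w(x_0)>0$ and $w'(x_0)>0$ persist on $[x_0,x_0+\delta)$ for some $\delta>0$ by continuity, contradicting maximality of $x_0$. Hence $x_0=b$, and combining $w,w'>0$ on $[0,b)$ with the endpoint computation above gives $u_1>u_2$ and $u_1'>u_2'$ on all of $[0,b]$. There is no real obstacle here — this is a standard continuation/comparison argument — and the only point requiring a little care is the bookkeeping at $x_0$: one must use the $w''>0$ propagation to recover the strict inequalities up to and including $x_0$ \emph{before} invoking the definition of the supremum. (An alternative is to write $w''=c(x)w$ with $c(x):=[g(x,u_1)-g(x,u_2)]/(u_1-u_2)>0$ wherever $w\ne 0$ and run a Sturm-type argument, but the direct sign analysis above avoids any division.)
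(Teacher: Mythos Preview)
Your proof is correct and follows essentially the same route as the paper: set $w=u_1-u_2$, observe that $w''>0$ wherever $w>0$ (the paper writes this via the mean value theorem as $w''=\theta(x)w$ with $\theta(x)=\frac{\partial g}{\partial u}(x,\xi(x))>0$, which is precisely the alternative you mention at the end), and propagate the strict inequalities from the initial data. Your supremum/continuation argument simply makes explicit what the paper compresses into the single phrase ``the result follows since $u(0),u'(0)>0$''.
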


\begin{proof}
Set $u(x)=u_1(x)- u_2(x)$, then $u''(x)=\theta(x)u(x)$ on 
$[0,b)$, where $\theta(x):=
\frac{\partial g}{\partial u}(x,\xi(x))> 0$. Thus, the result follows since $u(0), u'(0) >0$. 
\end{proof}

\medskip
Next, we define our shooting operator as follows.
For each fixed $\lambda \in \mathbb R$, let $u_\lambda$ be the unique solution of problem
\begin{equation}
 \label{shoot}
 \left\{ \begin{array}{ll}
       u''(x)=g(x,u(x))+p(x)\\
       u(0)=\lambda, \;\; u'(0)=a_0\lambda
         \end{array}
\right.
\end{equation}
and define 
the function $T: \mathcal D \to \mathbb R$, by
$$T(\lambda)= \frac{u'_\lambda (1)}{u_\lambda (1)},$$
where $\mathcal D\subset \mathbb R$ 
is the set of values of $\lambda$ such that the corresponding solution $u_\lambda$ of (\ref{shoot}) is defined on $[0,1]$, with $u_\lambda(1)\ne 0$. 
Thus, solutions of (\ref{eq})-(\ref{radiation}) 
that do not vanish on $x=1$ can 
be characterized as the functions
$u_\lambda$, where $\lambda\in \mathcal D$ is such that  $T(\lambda)=a_1$. 
By continuity arguments, it is easy to verify that, 
for each $s\in \mathbb R$, there exists $\lambda$ 
such that $u_\lambda (1) = s$. 
By Lemma \ref{nosecruzan}, 
this value of $\lambda$ is unique; 
in particular, there exists a unique $\lambda_0$ such that $u_{\lambda_0}=0$.  
Thus, we conclude that 
$$\mathcal D= (\lambda_*, \lambda_0)\cup (\lambda_0, \lambda^*)$$
for some $\lambda_*\ge -\infty$ and $\lambda^* \le +\infty$.  

From (\ref{a-posit}), it follows that $\lambda_0<0$ and, furthermore:  
if $\lambda >0$ then $u_\lambda$ is positive and if 
$\lambda_0\le \lambda\le 0$ then $u_\lambda$ vanishes exactly once in $[0,1]$. In particular, $u_{\lambda_0}<0$ in $[0,1)$
and, since $u_{\lambda_0}''(x)>0$ when $x$ is 
close to $1$, we conclude that
 $u_{\lambda_0}'(1) >0$. Hence,

$$\lim_{\lambda \rightarrow \lambda_0^-} T(\lambda)= -\infty, \qquad \lim_{\lambda \rightarrow \lambda_0^+} T(\lambda)= +\infty.$$
We claim that also 
$$\lim_{\lambda \rightarrow (\lambda^*)^-} T(\lambda)= +\infty,\qquad \lim_{\lambda \rightarrow (\lambda_*)^+} T(\lambda)= +\infty.$$ 
Indeed, observe firstly that, because solutions of (\ref{shoot}) do not cross each other,  
$\lim_{\lambda \rightarrow (\lambda^*)^-} u_\lambda (1)=+\infty$.
On the other hand, multiplying (\ref{eq}) by $u'$
it is easy to see, given $M>0$ that 
$$\left| u'_\lambda (1)\right| \geq \sqrt M \mathcal O(\left| u_\lambda(1)\right|)$$
for $|\lambda|$ sufficiently large. 
This implies that 
$$\left|T(\lambda)\right|=\left|\frac{u'_\lambda (1)}{u_\lambda (1)}\right| >\sqrt M$$
and the claim follows.

The previous considerations show the 
existence of 
$\lambda_{min}\in (\lambda_0,\lambda^*)$ such 
that $T(\lambda_{min}) \le T(\lambda)$ 
for all 
$\lambda\in (\lambda_0,\lambda^*)$. 
The value $a_{min}:=T(\lambda_{min})>0$ depends on $p$ 
and, in this context, Theorem \ref{agrande} simply states that if $p\ge p_0$ for some large enough constant $p_0$ then
$a_{min} >a_1$. 
Also, we easily deduce some of
the conclusions of 
Theorems \ref{multi} and \ref{shooting}, as shown in the following figure. 

\bigskip

\qquad\quad \includegraphics[scale=0.5]{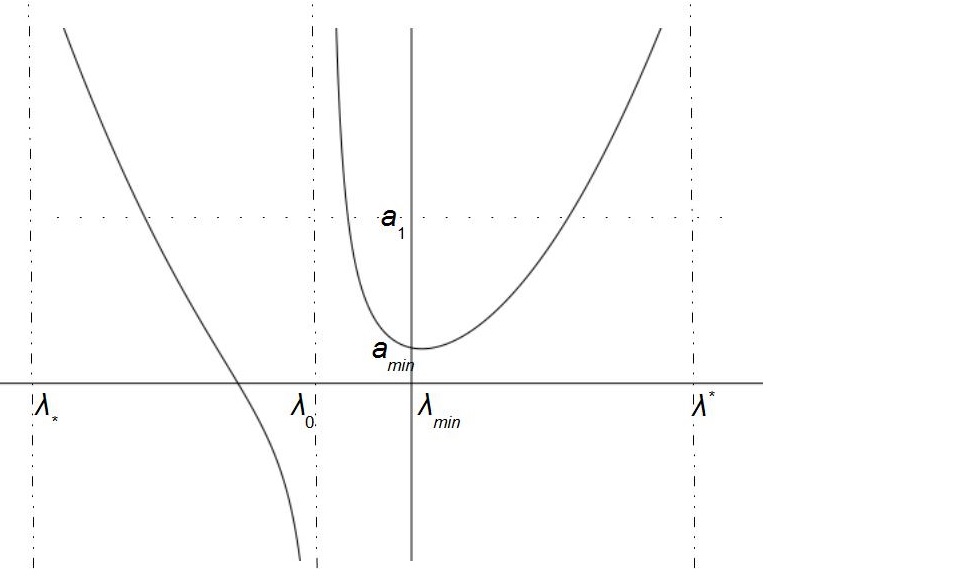}

\bigskip

By continuity, 
there exists $\lambda < \lambda_0$ such that
$T(\lambda)=a_1$; the 
corresponding $u_\lambda$ is a negative solution.
Uniqueness of negative solutions does not follow directly 
from this setting, unless an extra assumption like (\ref{H_2}) is assumed for $u<0$ (see Proposition \ref{monot} below). 
However, recall that if $a_1< \frac {a_0}{a_0+1}$, then 
$\lambda_1 >0$ so (\ref{H_1}) is satisfied; thus
uniqueness holds if $a_1$ is small.

\medskip 

\begin{po28} From the previous considerations, the problem has a negative solution, which is unique if $a_1$ is sufficiently small.  
Moreover, 
the equation $T(\lambda)=a_1$ 
has, over the interval $(\lambda_0, \lambda^*)$ 
at least two solutions 
when $a_1>a_{min}$ is large 
and no solutions 
when $a_1$ is small. Finally, observe that, 
as the value 
of $a_1$ increases, at least one of those solutions is located 
in $(\lambda_0,0)$. 

\end{po28}

\begin{rem}

If $\lambda^*>0$ or, equivalently, 
if $u_0$ is defined on $[0,1]$, then we deduce that the problem has also a positive solution when $a_1\gg 0$.

\end{rem}

Under appropriate conditions, a lower bound for $a_{min}$ is easily obtained as follows: 

\begin{prop}
Assume that 
(\ref{crec}) and (\ref{a-posit}) hold. 
If there exists $r\le a_0$ such that
$g(x,u) + p(x) >r^2u$ for all $u \ge 0$ and all $x$, 
then $a_{min}>r$. 

\end{prop}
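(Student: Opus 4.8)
The plan is to prove the stronger pointwise statement that $T(\lambda)>r$ for \emph{every} $\lambda\in(\lambda_0,\lambda^*)$; since it was already established that $a_{min}=T(\lambda_{min})$ for some $\lambda_{min}$ in that interval, the bound $a_{min}>r$ then follows at once. The main device is a Riccati-type estimate for the quotient $h:=u_\lambda'/u_\lambda$ on the portion of $[0,1]$ where $u_\lambda$ is positive.

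Fix $\lambda\in(\lambda_0,\lambda^*)$. As recalled just before the statement, $u_\lambda(1)>0$, and there is $\sigma=\sigma_\lambda\in[0,1)$ with $u_\lambda>0$ on $(\sigma,1]$: if $\lambda>0$ one takes $\sigma=0$ and $u_\lambda>0$ on all of $[0,1]$; if $\lambda\le 0$ then $u_\lambda$ vanishes exactly once, at $\sigma=z_\lambda$, is negative on $[0,\sigma)$, and $u_\lambda'(\sigma)\ge 0$ (indeed $>0$ when $\sigma>0$, by uniqueness for the initial value problem). On $(\sigma,1]$ we have $u_\lambda\ge 0$, so the hypothesis yields $u_\lambda''=g(\cdot,u_\lambda)+p>r^2u_\lambda$, and therefore
$$h'=\frac{u_\lambda''}{u_\lambda}-h^2>r^2-h^2\qquad\text{on }(\sigma,1].$$

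Next I would check that $h$ enters the region $\{h>r\}$ near the left endpoint $\sigma$. If $\lambda>0$, then $h(0)=u_\lambda'(0)/u_\lambda(0)=a_0\ge r$, and when $a_0=r$ the displayed inequality forces $h'(0)>0$, so $h>r$ just to the right of $0$. If $\lambda\le 0$, then $u_\lambda(x)\to 0^+$ as $x\to\sigma^+$ while $u_\lambda$ is convex on $[\sigma,1]$ (again since $u_\lambda''>r^2u_\lambda\ge 0$ there) with $u_\lambda(\sigma)=0$, so $h(x)\ge 1/(x-\sigma)\to+\infty$. In every case there is $x_0\in[\sigma,1)$ with $h(x_0)>r$.

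Finally, I would show that $h>r$ persists up to $x=1$: if $x_1\in(x_0,1]$ were the first point with $h(x_1)=r$, then $h>r$ on $[x_0,x_1)$ would force $h'(x_1)\le 0$, contradicting $h'(x_1)>r^2-r^2=0$. Hence $h(1)=T(\lambda)>r$, and taking $\lambda=\lambda_{min}$ gives $a_{min}>r$. The argument is essentially routine; the only steps needing care are the borderline case $a_0=r$, where the \emph{strict} inequality $g+p>r^2u$ is exactly what is needed to lift $h$ off the level $r$, and, when $u_\lambda$ has a zero inside $[0,1)$, the fact that $h$ blows up there, which I handle via the convexity of $u_\lambda$ beyond its zero. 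I do not expect any genuine obstacle.
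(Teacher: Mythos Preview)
Your proof is correct and follows essentially the same approach as the paper: a Sturm-type comparison on the subinterval where $u_\lambda>0$, using the boundary data $u_\lambda'(x_0)\ge r\,u_\lambda(x_0)$ at the left end (either $x_0=0$ with $a_0\ge r$, or $u_\lambda(x_0)=0$). The only cosmetic difference is that the paper compares $u_\lambda$ with $v(x)=e^{rx}$ via the Wronskian $(vu_\lambda'-v'u_\lambda)'=vu_\lambda''-v''u_\lambda>0$, which is the linear form of your Riccati inequality $h'>r^2-h^2$.
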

\begin{proof}
 Fix $\lambda\in (\lambda_0,\lambda ^*)$ and 
let $v(x):= e^{rx}$. 
Define $x_0$ 
as the minimum value such that $u_\lambda$ 
is positive after $x_0$ 
and observe that
$$v(x)u_\lambda''(x) > v(x)r^2u_\lambda(x) >  
v(x)r ^2u_\lambda(x) = v''(x)u_\lambda(x)
$$
for $x>x_0$. 
Thus, 
$$
v(1)[u_\lambda'(1) - ru_\lambda(1)] > v(x_0)[u_\lambda'(x_0) - ru_\lambda(x_0)]\ge v(x_0)[u_\lambda'(x_0) - ru_\lambda(x_0)]\ge 0
$$
and we conclude that $T(\lambda)=\frac {u_\lambda'(1)}{u_\lambda(1)}>r$. 
\end{proof}

\begin{rem}
 For problem (\ref{concreto}), the previous proposition implies that $a_{min} > \min\{ a_0,a_1\}$, 
which provides an alternative proof of the fact that the problem has no positive nor sign-changing 
solutions when $a_1\le a_0$. 
\end{rem}

In order to complete the proof of Theorem 
\ref{multi}, let us make  
a more careful description of the graph of $T$. 
With this aim, compute
$$T'(\lambda)=\frac{\partial}{\partial \lambda}\left( \frac{u'_\lambda (1)}{u_\lambda (1)}\right)= \frac{u_\lambda (1) \frac{\partial u'_\lambda}{\partial \lambda}(1)-u'_\lambda (1)\frac{\partial u_\lambda}{\partial \lambda}(1)}{u_\lambda (1)^2}$$
and set $w_\lambda:=\frac{\partial u_\lambda}{\partial \lambda}$, then 
$$T'(\lambda)=\frac{u_\lambda (1) w_\lambda'(1)-u'_\lambda(1)w_\lambda(1)}{u_\lambda(1)^2}.$$
Moreover, observe that $w_\lambda$ solves the linear problem
\begin{equation}
 \label{w}
 \left\{ \begin{array}{ll}
          w_\lambda''(x)=\frac{\partial g}{\partial u}(x,u_\lambda(x)) w_\lambda(x) \\
          w_\lambda(0)=1, \;\; w_\lambda'(0)=a_0,
         \end{array}
\right.
\end{equation}
and hence
\begin{align}
\label{tprima}
&u_\lambda (1) w_\lambda'(1)-u'_\lambda(1)w_\lambda(1)
=\int_0^1 \left( u_\lambda (x) w_\lambda''(x)- u''_\lambda (x) w_\lambda(x)\right) dx=\nonumber\\
&=\int_0^1 \left( u_\lambda(x)\frac{\partial g}{\partial u}(x,u_\lambda(x))-g(x, u_\lambda(x))-p(x) \right) w_\lambda(x)dx.
\end{align}

Taking into account that 
$w_\lambda(x)>0$ for all $x$ and that $u_\lambda$ is negative for $\lambda<\lambda_0$ and positive for $\lambda>0$, the following proposition is obtained:

\begin{prop}
\label{monot}
Assume that (\ref{superlin}), (\ref{crec}) and (\ref{a-posit}) hold. Then: 
\begin{enumerate}
\item 
$T$ is strictly decreasing for 
$\lambda <\lambda_0$, provided that 
(\ref{H_2}) holds for $u<0$. 

\item 
$T$ is strictly increasing for 
$\lambda >C$, provided that 
(\ref{H_2}) holds for $u>C\ge 0$. 

\end{enumerate}

\end{prop}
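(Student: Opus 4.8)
The plan is to read off both monotonicity statements directly from the sign of the integrand in the formula \eqref{tprima}, exploiting that $w_\lambda>0$ everywhere (which follows from Lemma \ref{nosecruzan} applied to the pair $u_{\lambda+h}, u_\lambda$, or directly from the linear problem \eqref{w} since $\frac{\partial g}{\partial u}\geq 0$ by \eqref{crec} and the initial data are positive). Since $T'(\lambda)$ has the same sign as $u_\lambda(1)w_\lambda'(1)-u_\lambda'(1)w_\lambda(1)$, and by \eqref{tprima} this equals $\int_0^1\bigl(u_\lambda(x)\frac{\partial g}{\partial u}(x,u_\lambda(x))-g(x,u_\lambda(x))-p(x)\bigr)w_\lambda(x)\,dx$, the whole argument reduces to controlling the sign of the scalar factor $u_\lambda(x)\frac{\partial g}{\partial u}(x,u_\lambda(x))-g(x,u_\lambda(x))-p(x)$ pointwise in $x$.

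For item (1), I would first observe that for $\lambda<\lambda_0$ the solution $u_\lambda$ is negative on all of $[0,1]$ (this is part of the discussion establishing $\mathcal D=(\lambda_*,\lambda_0)\cup(\lambda_0,\lambda^*)$ and that $u_{\lambda_0}<0$ on $[0,1)$; for $\lambda<\lambda_0$ non-crossing pushes the graph strictly below). Then \eqref{H_2} with $u<0$, multiplied through by the \emph{negative} quantity $u_\lambda(x)$, reverses the inequality and yields exactly $u_\lambda(x)\frac{\partial g}{\partial u}(x,u_\lambda(x))<g(x,u_\lambda(x))+p(x)$, i.e.\ the scalar factor is strictly negative for every $x$. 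Multiplying by $w_\lambda(x)>0$ and integrating gives $T'(\lambda)<0$, so $T$ is strictly decreasing on $(\lambda_*,\lambda_0)$.

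For item (2), the argument is symmetric but now $u_\lambda(x)>0$: for $\lambda>C\ge 0$ the solution $u_\lambda$ is positive on $[0,1]$ (shown in the construction: $\lambda>0\Rightarrow u_\lambda>0$), so \eqref{H_2} with $u>C$, multiplied by the \emph{positive} factor $u_\lambda(x)$, preserves the direction and gives $u_\lambda(x)\frac{\partial g}{\partial u}(x,u_\lambda(x))>g(x,u_\lambda(x))+p(x)$, i.e.\ the scalar factor is strictly positive. Again multiplying by $w_\lambda>0$ and integrating yields $T'(\lambda)>0$. The only mild subtlety, and the step I would state with a little care, is the justification that $\lambda\mapsto u_\lambda$ is $C^1$ so that $w_\lambda=\frac{\partial u_\lambda}{\partial\lambda}$ is well defined and solves \eqref{w} — this is standard smooth dependence on initial conditions for the ODE \eqref{shoot}, valid on the (open) set $\mathcal D$ — together with the sign $w_\lambda>0$; once those are in hand there is essentially no obstacle, the result being a one-line sign chase in \eqref{tprima}.

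\begin{proof}
By smooth dependence on initial data, $\lambda\mapsto u_\lambda$ is of class $C^1$ on the open set $\mathcal D$, so $w_\lambda:=\frac{\partial u_\lambda}{\partial\lambda}$ is well defined and solves \eqref{w}. Since $\frac{\partial g}{\partial u}(x,u_\lambda(x))\ge 0$ by \eqref{crec} and $w_\lambda(0)=1$, $w_\lambda'(0)=a_0>0$, we get $w_\lambda(x)>0$ for all $x\in[0,1]$. Recall that $T'(\lambda)$ has the sign of $u_\lambda(1)w_\lambda'(1)-u_\lambda'(1)w_\lambda(1)$, which by \eqref{tprima} equals
$$
\int_0^1\left(u_\lambda(x)\frac{\partial g}{\partial u}(x,u_\lambda(x))-g(x,u_\lambda(x))-p(x)\right)w_\lambda(x)\,dx.
$$

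(1) For $\lambda<\lambda_0$ the solution $u_\lambda$ is negative on $[0,1]$; multiplying \eqref{H_2} (valid for $u<0$) by $u_\lambda(x)<0$ reverses the inequality and gives
$$
u_\lambda(x)\frac{\partial g}{\partial u}(x,u_\lambda(x))-g(x,u_\lambda(x))-p(x)<0
$$
for every $x\in[0,1]$. Since $w_\lambda>0$, the displayed integral is strictly negative, hence $T'(\lambda)<0$.

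(2) For $\lambda>C\ge 0$ the solution $u_\lambda$ is positive on $[0,1]$; multiplying \eqref{H_2} (valid for $u>C$) by $u_\lambda(x)>0$ preserves the inequality and gives
$$
u_\lambda(x)\frac{\partial g}{\partial u}(x,u_\lambda(x))-g(x,u_\lambda(x))-p(x)>0
$$
for every $x\in[0,1]$. Since $w_\lambda>0$, the displayed integral is strictly positive, hence $T'(\lambda)>0$.
\end{proof}
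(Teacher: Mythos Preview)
Your approach is exactly the paper's: the proposition is stated immediately after the derivation of \eqref{tprima} together with the one-sentence remark that $w_\lambda>0$ and that $u_\lambda$ is negative for $\lambda<\lambda_0$ and positive for $\lambda>0$, so you have simply spelled out what the paper left implicit.

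There is, however, one small logical gap in your item (2). To invoke \eqref{H_2} at the value $u=u_\lambda(x)$ you need $u_\lambda(x)>C$, not merely $u_\lambda(x)>0$; you only asserted the latter. The fix is immediate: for $\lambda>0$ one has $u_\lambda(0)=\lambda>0$, $u_\lambda'(0)=a_0\lambda>0$, and by \eqref{crec}, \eqref{a-posit} and $g(x,0)=0$ the equation gives $u_\lambda''>0$ as long as $u_\lambda\ge 0$; hence $u_\lambda'$ stays positive, $u_\lambda$ is increasing on $[0,1]$, and therefore $u_\lambda(x)\ge u_\lambda(0)=\lambda>C$ for every $x$. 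With this observation in place your sign chase goes through verbatim. (The paper itself glosses over this point.)
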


\begin{rem}

In particular, the previous proposition shows that, when (\ref{crec}) and (\ref{a-posit}) are assumed, 
the conclusions of Theorem \ref{unicanegativa} are retrieved in a simple manner.

\end{rem}

Assume firstly that $p=0$.  
Although (\ref{a-posit}) obviously fails, the operator 
$T$ is well defined, with $\lambda_0=0$. 
Moreover, using the L'H\^opital rule we deduce that
$$\lim_{\lambda\to 0}T(\lambda)= \lim_{\lambda\to 0} \frac{u_\lambda'(1)}{u_\lambda(1)} = 
\lim_{\lambda\to 0} \frac{w_\lambda'(1)}{w_\lambda(1)} = 
\frac{\Phi'(1)}{\Phi(1)},
$$
where $\Phi:=w_0$, that is, 
the unique solution of the linear initial value problem 

\begin{equation}
\label{ivp}
\Phi''(x)= \frac{\partial g}{\partial u}(x,0)\Phi(x), 
\qquad \Phi'(0)=a_0\Phi(0)=a_0. 
\end{equation}
Thus, $T$ can be extended continuously 
to a positive function defined over 
$(\lambda_*,\lambda^*)$, which tends to $+\infty$ as $\lambda\to (\lambda_*)^+$ or $\lambda\to (\lambda^*)^-$.
Furthermore, if 
(\ref{H_2}) holds for $u\ne 0$ then it decreases strictly on $(\lambda_*,0)$ and increases strictly on $(0,\lambda ^*)$.

We are now in condition of completing the proof 
of Theorem \ref{multi}.  To this end, we shall need 
the following lemma:

\begin{lem}
\label{lemaphi}
Assume that
(\ref{crec}) and (\ref{H_9}) holds. Then 
$\Phi'(1)<a_1\Phi(1)$.
 \end{lem}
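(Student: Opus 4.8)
The plan is to exploit the variational characterization of $\lambda_1$ applied to the function $\Phi$ solving \eqref{ivp}. First I would note that $\Phi>0$ on $[0,1]$: since $\Phi(0)=1>0$ and $\Phi'(0)=a_0>0$, and $\Phi''=\frac{\partial g}{\partial u}(x,0)\Phi$, Lemma \ref{nosecruzan} (or the same elementary argument with $\theta(x)=\frac{\partial g}{\partial u}(x,0)\ge 0$, which holds by \eqref{crec}) forces $\Phi$ and $\Phi'$ to stay strictly positive. In particular $\Phi(1)>0$, so the claimed inequality $\Phi'(1)<a_1\Phi(1)$ is equivalent to $\frac{\Phi'(1)}{\Phi(1)}<a_1$, i.e.\ to $T(0)<a_1$ in the $p=0$ setting discussed just above the lemma.

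Next I would integrate. Multiplying \eqref{ivp} by $\Phi$ and integrating over $[0,1]$ gives
\[
\Phi'(1)\Phi(1)-\Phi'(0)\Phi(0)=\int_0^1\Phi'(x)^2\,dx+\int_0^1\frac{\partial g}{\partial u}(x,0)\,\Phi(x)^2\,dx,
\]
so that, using $\Phi'(0)=a_0=a_0\Phi(0)^2/\Phi(0)$, hm — more cleanly, $\Phi'(0)\Phi(0)=a_0$:
\[
\Phi'(1)\Phi(1)=a_0+\int_0^1\Phi'(x)^2\,dx+\int_0^1\frac{\partial g}{\partial u}(x,0)\,\Phi(x)^2\,dx.
\]
Now I would invoke the Sturm--Liouville characterization of $\lambda_1$: for \emph{any} smooth $v$ satisfying the boundary conditions \eqref{radiation}, one has $-\int_0^1 v''v\,dx\ge \lambda_1\|v\|_{L^2}^2$, and integrating by parts $-\int_0^1 v''v\,dx=\int_0^1 v'^2\,dx-[a_1 v(1)^2-a_0v(0)^2]$. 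But $\Phi$ does \emph{not} satisfy the right endpoint condition, so this cannot be applied to $\Phi$ directly — that is exactly where the real work lies.

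The main obstacle, then, is converting information about $\Phi$ (which only satisfies the left Robin condition) into the eigenvalue inequality, which lives on functions satisfying both. I expect the paper to resolve this by a contradiction/monotonicity argument: assume $\Phi'(1)\ge a_1\Phi(1)$, and then build a genuine competitor for the Rayleigh quotient. Concretely, using \eqref{H_9} one has $\frac{\partial g}{\partial u}(x,0)\le -\lambda_1$ with strict inequality somewhere, so from the displayed identity
\[
\Phi'(1)\Phi(1)\le a_0+\int_0^1\Phi'(x)^2\,dx-\lambda_1\int_0^1\Phi(x)^2\,dx,
\]
with strict inequality. On the other hand, applying the Rayleigh characterization to $\Phi$ would give $\int_0^1\Phi'^2\,dx-[a_1\Phi(1)^2-a_0\Phi(0)^2]\ge\lambda_1\int_0^1\Phi^2\,dx$, i.e.\ $\int_0^1\Phi'^2\,dx-\lambda_1\int_0^1\Phi^2\,dx\ge a_1\Phi(1)^2-a_0$. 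Combining, $\Phi'(1)\Phi(1)<a_0+a_1\Phi(1)^2-a_0=a_1\Phi(1)^2$, hence $\Phi'(1)<a_1\Phi(1)$. The only gap is the legitimacy of that Rayleigh step for $\Phi$; I would close it either by extending the variational principle to the one-sided-Robin situation (the relevant quadratic form $\int\Phi'^2-a_1\Phi(1)^2+a_0\Phi(0)^2$ is still $\ge\lambda_1\|\Phi\|^2$ on \emph{all} $H^1$ functions, since $\lambda_1$ is the infimum of this form over the unit sphere in $L^2$ without any boundary constraint — the boundary conditions are natural for this functional), and this observation — that $\lambda_1=\inf\{\int_0^1 v'^2-a_1v(1)^2+a_0v(0)^2 : \|v\|_{L^2}=1\}$ over unconstrained $v$ — is the key point that makes the argument go through.
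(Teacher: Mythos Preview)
Your argument is correct, but it is not the paper's. The paper proceeds by a direct Wronskian/Sturm comparison: take the first eigenfunction $\varphi_1$ (normalized by $\varphi_1(0)=1$, hence $\varphi_1>0$ on $[0,1]$), observe that $\varphi_1\Phi''=\frac{\partial g}{\partial u}(x,0)\Phi\varphi_1\le -\lambda_1\Phi\varphi_1=\Phi\varphi_1''$ with strict inequality somewhere, and integrate $(\varphi_1\Phi'-\Phi\varphi_1')'$ over $[0,1]$. The boundary term at $0$ vanishes because both $\Phi$ and $\varphi_1$ satisfy the left Robin condition, so one gets $\varphi_1(1)\Phi'(1)<\Phi(1)\varphi_1'(1)=a_1\Phi(1)\varphi_1(1)$, and dividing by $\varphi_1(1)>0$ finishes.

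Your route is variational: multiply by $\Phi$, use \eqref{H_9} to replace $\frac{\partial g}{\partial u}(\cdot,0)$ by $-\lambda_1$, and then invoke the Rayleigh inequality $\int_0^1 v'^2-a_1v(1)^2+a_0v(0)^2\ge\lambda_1\|v\|_{L^2}^2$ for \emph{unconstrained} $v\in H^1$. This is valid because the radiation conditions are natural for that quadratic form, but note that the paper only records the characterization of $\lambda_1$ as a minimum over functions already satisfying \eqref{radiation}; the unconstrained version you rely on is a (standard) extra ingredient. The paper's Wronskian argument sidesteps this entirely by working pointwise with $\varphi_1$, and is shorter and more self-contained; your approach, on the other hand, makes transparent why the hypothesis is exactly a comparison with $\lambda_1$ and would generalize more readily to settings where one has the form but not an explicit eigenfunction.
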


\begin{proof}
Let $\varphi_1$ be the (unique) eigenfunction corresponding to $\lambda_1$ such that $\varphi_1(0)=1$, then
it is readily verified that $\varphi_1(x)>0$ for all $x$. Moreover, it is seen from (\ref{ivp}) 
that also $\Phi(x)> 0$ for all 
$x$. 
Then 
$$\varphi_1(x)\Phi''(x) 
= \frac {\partial g}{\partial u}(x,0)\Phi(x)\varphi_1(x) 
\le -\lambda_1\Phi(x)\varphi_1(x)= 
\Phi(x)\varphi_1''(x)
$$ 
and the inequality is strict for some $x$. Integration yields
$$
\varphi(1)\Phi'(1) < \Phi(1)\varphi'(1)  = a_1\Phi(1)\varphi(1)
$$
and the proof follows.

\end{proof}

\begin{pt11}
In view of the previous Lemma, the proof is an immediate corollary of the following proposition, slightly more general: 

\end{pt11}

\begin{prop}
\label{exact-prop}
Assume that (\ref{superlin}), (\ref{crec}) and (\ref{a-posit}) hold and that 
$\Phi'(1) < a_1 \Phi(1)$. Then there exists a 
constant $p_1 >0$ such that problem (\ref{eq})-(\ref{radiation}) has at least three solutions when $\| p\|_{\infty}<  p_1$. Moreover, one of the solutions is negative, 
one of them positive and another one sign-changing. 
If furthermore
(\ref{H_2}) holds with $p=0$  for all $u\ne 0$, then 
there exists 
$p_1$ such that the problem has exactly three solutions, provided that $\|p\|_{\infty} < p_1$. 
Moreover, exactly one of the solutions 
is negative and another one changes sign. 
\end{prop}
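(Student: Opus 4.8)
The plan is to treat the shooting operator $T=T_p$ of (\ref{shoot}) as a small perturbation of the limit operator $T_0$ corresponding to $p=0$. Recall that $T_0$ is continuous on $(\lambda_*,\lambda^*)\ni 0$ with $T_0(0)=\frac{\Phi'(1)}{\Phi(1)}$ and $T_0\to+\infty$ at both endpoints, so the hypothesis $\Phi'(1)<a_1\Phi(1)$ is exactly $T_0(0)<a_1$; hence $T_0=a_1$ is solved once in $(\lambda_*,0)$ and once in $(0,\lambda^*)$, i.e.\ there is a negative and a positive solution of (\ref{eq})--(\ref{radiation}) at $p=0$, and if (\ref{H_2}) with $p=0$ holds for all $u\ne 0$ these are the only nontrivial ones by Proposition \ref{monot}. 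By continuous dependence of $u_\lambda$ on $(\lambda,p)$, the thresholds $\lambda_*,\lambda_0,\lambda^*$ move continuously with $p$ and $\lambda_0=\lambda_0(p)\to 0$ as $\|p\|_\infty\to0$. The negative solution survives for every $p>0$ (it is produced by $T(\lambda_0^-)=-\infty$, $T(\lambda_*^+)=+\infty$). For the positive one I would fix once and for all a small $\bar\lambda>0$ with $T_0(\bar\lambda)<a_1$; then for $\|p\|_\infty$ small $\bar\lambda\in(\lambda_0,\lambda^*)$ and $T_p(\bar\lambda)<a_1$, so $T_p=a_1$ has a root in $(\bar\lambda,\lambda^*)$, a positive solution.

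The heart of the matter is the sign-changing solution, which must come from a root of $T_p=a_1$ in the interval $(\lambda_0,0)$ (where $u_\lambda$ vanishes exactly once). Here the key observation is that, for $\|p\|_\infty$ small, $T_p$ is \emph{strictly decreasing} on $(\lambda_0,0)$: a short a~priori estimate gives $|\lambda_0|=O(\|p\|_{L^1})$, hence $\|u_\lambda\|_\infty=O(\|p\|_{L^1})$ there, and using (\ref{tprima}), the bound $w_\lambda\ge 1$ coming from (\ref{crec})--(\ref{w}), a uniform modulus of continuity $\tilde\omega$ of $\frac{\partial g}{\partial u}(x,\cdot)$, and $\int_0^1 p=\|p\|_{L^1}$,
$$
T_p'(\lambda)\,u_\lambda(1)^2=\int_0^1\Bigl(u_\lambda\tfrac{\partial g}{\partial u}(x,u_\lambda)-g(x,u_\lambda)-p\Bigr)w_\lambda\,dx\ \le\ \tilde\omega(\|u_\lambda\|_\infty)\|u_\lambda\|_\infty\int_0^1 w_\lambda\,dx-\|p\|_{L^1}\ <\ 0 .
$$
Consequently $T_p$ drops from $+\infty$ at $\lambda_0^+$ down to $T_p(0)$, and a root of $T_p=a_1$ in $(\lambda_0,0)$ — hence a sign-changing solution — exists precisely when $a_1>T_p(0)$.

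For the exact-multiplicity statement under (\ref{H_2}) with $p=0$ I would argue that $T_p$ is strictly monotone on each of the three arcs that can carry a root: decreasing on $(\lambda_*,\lambda_0)$ (Proposition \ref{monot}(1), valid for the actual $p$ because $p(x)/u<0$ when $u<0$), decreasing on $(\lambda_0,0)$ (the display above), and increasing on a terminal arc $(C,\lambda^*)$ — by superlinearity $m_C:=\inf_{x\in[0,1],\,u\ge C}\bigl(u\frac{\partial g}{\partial u}(x,u)-g(x,u)\bigr)>0$ for any $C>0$, so (\ref{H_2}) holds with the actual $p$ on $u\ge C$ once $\|p\|_\infty<m_C$ and Proposition \ref{monot}(2) applies; combining this with the uniqueness provided by Theorems \ref{uniq} and \ref{unicanegativa} on $u<0$ and on $u\ge C$, with the nondegeneracy of $u\equiv0$ at $p=0$, and with the a~priori bound of Theorem \ref{bound}, one counts exactly three solutions, one on each arc, and identifies them as one negative, one sign-changing and one positive. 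The main obstacle throughout is the inequality $a_1>T_p(0)$, i.e.\ ruling out that the branch emanating from $u\equiv0$ as $p$ leaves $0$ is a second positive solution rather than the sign-changing one; this requires controlling $T_p(0)=(u_0^p)'(1)/u_0^p(1)$ — e.g.\ via the linearization $u_0^p\approx v_p$ with $v_p''=\frac{\partial g}{\partial u}(\cdot,0)v_p+p$, $v_p(0)=v_p'(0)=0$ — and is the step where the smallness of $\|p\|_\infty$ and the structure of $\Phi$ must really be used; everything else is continuity and monotonicity bookkeeping once this is secured.
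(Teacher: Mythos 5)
Your overall architecture is the same as the paper's (shooting operator, the $p=0$ picture, monotonicity of $T$ on the three arcs, a priori bounds for the exact count), and the parts you complete — the negative solution, the positive solution via a fixed $\bar\lambda>0$ with $T_0(\bar\lambda)<a_1$, the strict decrease of $T_p$ on $(\lambda_0,0)$ from (\ref{tprima}) together with $w_\lambda\ge 1$ and $\|u_\lambda\|_\infty=O(\|p\|_{L^1})$ — are sound and can be written out rigorously. The genuine gap is exactly the step you yourself flag as ``the main obstacle'': the inequality $T_p(0)<a_1$. This is also the only place where the paper's own proof is not an argument but an assertion (``$T(0)$ is close to $\frac{\Phi'(1)}{\Phi(1)}$''), and that assertion is false. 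Writing $p=\varepsilon q$ and $u_0^p=\varepsilon v+o(\varepsilon)$ with $v''=\frac{\partial g}{\partial u}(x,0)v+q$, $v(0)=v'(0)=0$, one gets $T_p(0)\to \frac{v'(1)}{v(1)}$, and the Wronskian identity $(v\Phi'-v'\Phi)'=-q\Phi$ with $v\Phi'-v'\Phi$ vanishing at $0$ gives $v(1)\Phi'(1)-v'(1)\Phi(1)=-\int_0^1 q\Phi\,dx<0$, i.e. $\frac{v'(1)}{v(1)}>\frac{\Phi'(1)}{\Phi(1)}$ \emph{strictly}. So the hypothesis $\Phi'(1)<a_1\Phi(1)$ does not transfer to $T_p(0)<a_1$; the limit of $T_p(0)$ is governed by the inhomogeneous linearization $v$, not by $\Phi$.

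Worse, the inequality can genuinely fail under all the stated hypotheses: take $g(x,u)=u^3$, $a_0=a_1=1$, $p\equiv\varepsilon$. Then $\Phi=1+x$, so $\Phi'(1)=1<2=a_1\Phi(1)$, and (\ref{H_2}) with $p=0$ reads $3u^2>u^2$; yet $u_0^\varepsilon\approx \varepsilon x^2/2$ gives $T_\varepsilon(0)\to 2>1=a_1$. Your own monotonicity estimate on $(\lambda_0,0)$ then shows $T_\varepsilon>T_\varepsilon(0)>a_1$ there, so $T_\varepsilon=a_1$ has \emph{no} root in $(\lambda_0,0)$: there is no sign-changing solution. (Equivalently: $0$ is not an eigenvalue of $w''$ under (\ref{radiation}) here, so the implicit function theorem at $u\equiv0$, $p=0$ yields a unique small solution $u_\varepsilon=\varepsilon z+o(\varepsilon)$ with $z''=1$, $z=(x^2+x+1)/2>0$ — a second \emph{positive} solution.) So the count ``at least three''/``exactly three'' survives and can be completed along your lines, but the classification of the middle root as sign-changing cannot be proved from the hypotheses; the gap you isolated is not closable, and your reduction ``a sign-changing solution exists precisely when $a_1>T_p(0)$'' in fact turns into a counterexample to that part of the statement.
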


\begin{proof}
From the previous considerations we know that, if $p$ is small, then $\lambda^*>0$ and
$T(0) < a_1$; thus, the existence of at least three solutions follows. Clearly, one of the solutions is negative, another one is positive and another one changes sign. 

From now on, assume that 
(\ref{H_2}) with $p=0$ holds for all $u\ne 0$.
Consider, for arbitrary $p$, the mapping
$$
R_p(\lambda) := u_\lambda'(1) - a_1u_\lambda(1).
$$
Let us firstly take $p=0$. 
From the previous computations, 
we know that 
$sgn(T'(\lambda))=sgn (u_\lambda)=sgn (\lambda)$ for $\lambda\neq 0$ and
$T(0)=\frac{\Phi'(1)}{\Phi(1)} <a_1$, whence
$R_0$ has 
exactly three roots $\{0, \lambda_\pm\}$ with 
$\lambda_-<0<\lambda_+$. 
Moreover, write as before 
$$T'(\lambda) =
\frac{u_\lambda (1) w_\lambda'(1)-u'_\lambda(1)w_\lambda(1)}{u_\lambda(1)^2} 
= \frac{w_\lambda(1)}{u_\lambda(1)}
\left(\frac{w_\lambda'(1)}{w_\lambda(1)} - 
T(\lambda)
\right) 
$$ 
to deduce that
$$\frac{w_{\lambda_\pm}'(1)}{w_{\lambda_\pm}(1)} 
> T(\lambda_\pm)=a_1.$$

Next, observe that 
$$R_0'(\lambda)= w_\lambda'(1) - a_1w_\lambda(1)= 
w_\lambda(1)\left(
\frac{w_\lambda'(1)}{w_\lambda(1)} -a_1\right), 
$$
so 
$R_0'(\lambda_\pm)>0$. On the other hand, $R_0'(0) = \Phi'(1) - a_1 \Phi(1) <0$ and, by continuity, we conclude that if $p$
is close to $0$ then $R_p$ has exactly three roots. 
Furthermore, $T(0)$ is close to $\frac{\Phi'(1)}{\Phi(1)}<a_1$, so the equation $T(\lambda)=a_1$ has at least one solution in 
$(\lambda_0,0)$. Finally, observe that if $p$ is small 
then $u_0$ is defined in $[0,1]$; thus, $\lambda^*>0$ and letting $p$ be smaller 
if necessary we conclude that the equation 
$T(\lambda)=a_1$ has also a solution in 
$(0,\lambda^*)$
\end{proof}

\begin{rem}
In particular, all the assumptions of the previous proposition are fulfilled for problem (\ref{concreto}) if (and only if) $a_1>a_0$. Indeed, in this 
case it is readily seen that $\lambda_1 < -a_1 ^2$
and hence 
$\frac{\partial g}{\partial u}(x,0)=L(x) \le a_1^2  < -\lambda_1$.

\end{rem}

\section{Open questions}

\begin{enumerate}
\item
Numerical experiments for the particular case (\ref{concreto}) suggest that 
$T'' >0$ for $\lambda>\lambda_0$. If this is true, then an exact multiplicity result yields for arbitrary $p$, depending on whether $a_{min}$ is 
smaller, equal or larger than $a_1$. It would be interesting to investigate if this fact could be verified for the general case, under appropriate conditions, 
using the differential equation for 
$z_\lambda:=\frac{\partial w_\lambda}{\partial\lambda}=\frac{\partial^2 u_\lambda}{\partial\lambda^2}$, namely
$$z_\lambda''(x) = \frac{\partial g}{\partial u}(x,u_\lambda(x)) z_\lambda(x) + \frac{\partial^2 g}{\partial u^2}(x,u_\lambda(x)) w_\lambda(x)^2, \qquad z_\lambda(0)=z_\lambda'(0)=0.
$$

\item 
Is it possible to obtain 
an exact multiplicity result also for $a_1$ 
large? Observe that, in such a case, 
the behaviour of $T$ can be controlled 
near $\lambda_0$, but it is not easy to see what happens as $\lambda$ gets closer to $\lambda_*$ or $\lambda^*$. 
In more precise terms, we may set $\varepsilon:=\frac 1{a_1}$ and 
$$R_\varepsilon(\lambda):= \varepsilon u_\lambda'(1) - u_\lambda(1).$$
Then $R_0(\lambda)=-u_\lambda(1)$ decreases from
$+\infty$ to $-\infty$ 
over $(\lambda_*, \lambda^*)$. Furthermore, 
$R_0'(\lambda)=-w_\lambda(1)<0$ for all $\lambda$; thus, if 
$\varepsilon$ is small, 
then $R_\varepsilon$ has, near $\lambda_0$,
a unique root. However, for 
$\varepsilon\ne 0$ the graph of $R_\varepsilon$ 
bends in such a way that it tends to $\pm \infty$ as $\lambda$ gets closer to $\lambda^*$ and $\lambda_*$ respectively. 
This ensures the existence of at least three solutions for $\varepsilon$ small, although there might be more. 
Clearly, there exists $\lambda_1$ such that 
$R_\varepsilon$ increases with $\varepsilon$ 
for $\lambda>\lambda_1$ and decreases when $\lambda<\lambda_1$; moreover, if $K\subset (\lambda_*,\lambda^*)$ 
is a compact neighborhood 
of $\lambda_0$, then $R_\varepsilon$
vanishes exactly once in $K$ when 
$\varepsilon=\varepsilon(K)$ is small. 
This is due to the fact that $R_\varepsilon$ tends to 
$R_0$ over $K$ for the $C^1$ norm. 
However, 
it is not clear which condition would be appropriate in order to prevent against a possible `strange' behaviour of $R_\varepsilon$ outside compact sets. For example, taking into account the 
superlinearity, we might impose the assumption that 
$\frac{\partial g}{\partial u}(x,u)$ tends uniformly to $+\infty$ as $|u|\to +\infty$. This would 
ensure that $R_\varepsilon$ has positive derivative 
near the endpoints of its domain but, 
still, it might change sign many times.

\item
How does the graph of $T$ vary with respect to $p$? 
Suppose for simplicity that $p$ is a constant and let 
$y_p:=\frac{\partial u_\lambda}{\partial p}$. 
Then 
$$y_p''=\frac{\partial g}{\partial u}(x,u_\lambda(x)) y_p+ 1\qquad y_p(0)=y_p'(0)=0
$$
and the sign of $\frac{\partial T}{\partial p}$ coincides with the sign of the integral
$$
\int_0^1 \left( u_\lambda(x)\frac{\partial g}{\partial u}(x,u_\lambda(x))-g(x, u_\lambda(x))-p \right) y_p(x) + u_\lambda(x)\, dx.
$$
If (\ref{H_2}) holds for $u<0$, then $\frac{\partial T}{\partial p} <0$ for $\lambda<\lambda_0$. In particular, the 
(unique) value $\lambda<\lambda_0$ 
for which $T(\lambda)=a_1$ 
moves to the left as $p$ increases. 
This is consistent with the fact 
that the negative 
solution tends uniformly to $-\infty$ as $p\to +\infty$. 
It seems difficult to obtain 
similar conclusions for $\lambda\in (\lambda_0,0)$
since $u_\lambda$ changes sign but, 
in general, if (\ref{H_2}) is satisfied for 
$u> C\ge 0$, then    
$\frac{\partial T}{\partial p} >0$ for $\lambda\ge C$. 
For example, this is the case in problem 
(\ref{concreto}), with $C= \sqrt[3]{\frac A{2K}}$.  

\end{enumerate}

\subsection*{Acknowledgement}

This work was partially supported by project 
UBACyT 20020120100029BA and PIP 11220130100006CO CONICET.

\end{document}